\numberwithin{equation}{section} %\usepackage{kpfonts} \mathtoolsset{showonlyrefs}
\newcommand{\N}{\mathbb{N}}
\newcommand{\R}{\mathbb{R}}
\renewcommand{\S}{\mathbb{S}^n}
\newcommand{\vertiii}[1]{{\left\vert\kern-0.25ex\left\vert\kern-0.25ex\left\vert #1
    \right\vert\kern-0.25ex\right\vert\kern-0.25ex\right\vert}}
\newcommand{\de}{\delta}
\newcommand{\Om}{\Omega}
\renewcommand{\epsilon} {\varepsilon}
\def\XXint#1#2#3{{\setbox0=\hbox{$#1{#2#3}{\int}$ }
\vcenter{\hbox{$#2#3$ }}\kern-.6\wd0}}
\theoremstyle{plain}
\newtheorem{theorem}{Theorem}[section]
\newtheorem{lemma}[theorem]{Lemma}
\theoremstyle{definition}
\theoremstyle{remark}
\newtheorem{remark}[]{Remark}
\renewcommand{\leq}{\leqslant}
\renewcommand{\geq}{\geqslant}
\begin{document}

\title[Small spheres with prescribed nonconstant mean curvature]{Small spheres with  prescribed nonconstant \\ mean curvature in Riemannian manifolds}

\author[A. Enciso, A. J. Fern\'andez and D. Peralta-Salas]{Alberto Enciso, Antonio J. Fern\'andez and Daniel Peralta-Salas}

\address{
\newline
\textbf{{\small Alberto Enciso}} \vspace{0.1cm}
\vspace{0.1cm}
\newline \indent Instituto de Ciencias Matem\'aticas, Consejo Superior de Investigaciones Cient\'ficas, 28049 Madrid, Spain}
\email{aenciso@icmat.es}

\address{
\vspace{-0.25cm}
\newline
\textbf{{\small Antonio J. Fern\'andez}} 
\vspace{0.1cm}
\newline \indent Departamento de Matem\'aticas, Universidad Aut\'onoma de Madrid, 28049 Madrid, Spain}
\email{antonioj.fernandez@uam.es}

\address{
\vspace{-0.25cm}
\newline
\textbf{{\small Daniel Peralta-Salas}} \vspace{0.1cm}
\vspace{0.1cm}
\newline \indent Instituto de Ciencias Matem\'aticas, Consejo Superior de Investigaciones Cient\'ficas, 28049 Madrid, Spain}
\email{dperalta@icmat.es}

\maketitle
$ $

\vspace{-0.5cm}

\begin{abstract}
Given a function $f$ on a smooth Riemannian manifold without boundary, we prove that if $p \in M$ is a non-degenerate critical point of $f$, then a  neighborhood  of $p$ contains a foliation by spheres with mean curvature proportional to $f$. This foliation is essentially unique. The nondegeneracy assumption can be substantially relaxed, at the expense of losing the property that the family of spheres with prescribed mean curvature defines a foliation.

\bigbreak
\noindent {\sc Keywords:} Mean curvature, Lyapunov--Schmidt, Brouwer degree, Riemannian manifold, foliation.

\noindent {\sc 2020 MSC:} 53A10, 53C42, 58J05, 58J55.
  
\end{abstract}

\section{Introduction and main results}

In this paper we are concerned with hypersurfaces with prescribed mean curvature in a Riemannian manifold without boundary $(M,g)$ of dimension $n+1$, with $n\geq1$. That is, given a positive function $f:M\to\R$, we are interested in the existence of smooth hypersurfaces $S\subset M$ whose mean curvature~$H$ coincides with~$f|_S$, possibly up to an unspecified multiplicative constant. For short, we will say that these hypersurfaces have prescribed mean curvature.

In the case of constant mean curvature (CMC) hypersurfaces, which corresponds to prescribing $f:= 1$, this question has attracted considerable attention. Techniques developed by White~\cite{White} show that, for a generic set of Riemannian metrics~$g$ on~$M$, the set of CMC hypersurfaces is a one-dimensional manifold, possibly empty or with infinitely many connected components. Regarding hypersurfaces diffeomorphic to a sphere, a classical result of Ye \cite{Ye1991}, which has been extended to several other geometric contexts by various authors~\cite{LMS,MePe2022,MMP}, asserts that~ if the scalar curvature of $(M,g)$ has a nondegenerate critical point~$p$, then there are many CMC spheres, which actually define a foliation on a (punctured) neighborhood of~$p$. For comparison, we shall next state a precise version of this fact:

\begin{theorem}[Ye, 1991] \label{T.Ye}
If $p \in M$ is a non-degenerate critical point of the scalar curvature of~$M$, then a punctured neighborhood of~$p$ contains a smooth foliation  $\mathscr{F}:= \{S_r : 0 < r < \delta\}$ by CMC spheres. The mean curvature of each~$S_r$ is $H = \overline{c}/r$ for some constant $\overline{c}> 0$.
\end{theorem}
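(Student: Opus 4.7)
The plan is to follow a Lyapunov--Schmidt reduction, as is standard for problems of this type. First I would introduce Riemannian normal coordinates centred at a point $q$ near $p$ and, writing points of $M$ as $\exp_q(r\,\omega)$ with $\omega \in S^n$, parametrize candidate hypersurfaces as normal graphs over the geodesic sphere:
\[
 \Sigma_{q,r,w} = \bigl\{ \exp_q\!\bigl( r(1+w(\omega))\,\omega\bigr) : \omega \in S^n\bigr\},
\]
with $w \in C^{2,\alpha}(S^n)$ small. The aim is to find $q=q(r)$ and $w=w(r)$ such that the mean curvature $H_{q,r,w}$ of $\Sigma_{q,r,w}$ is a constant (automatically equal, by integrating, to some $\bar c/r$).

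Next I would expand $H_{q,r,w}$ in powers of $r$. A direct computation using the expansion of the metric in normal coordinates gives that the mean curvature of the geodesic sphere $\Sigma_{q,r,0}$ is $n/r + a(q,\omega)\,r + O(r^2)$, where $a(q,\omega)$ is a quadratic expression in $\omega$ involving the Ricci curvature at $q$. The linearization of the mean curvature operator at $w=0$, after multiplying by $r^2$ and rescaling, is the Jacobi operator on $S^n$, which to leading order equals $-(\Delta_{S^n}+n)$. Its kernel is exactly the $(n+1)$-dimensional space spanned by the coordinate functions $\omega_1,\ldots,\omega_{n+1}$, reflecting the fact that infinitesimal translations of the centre $q$ leave the mean curvature of a round sphere invariant. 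Lyapunov--Schmidt reduction thus splits the problem: the part of the equation orthogonal to this kernel is solved, via the implicit function theorem, by a unique small $w=w(q,r) \in \ker^\perp$ depending smoothly on $(q,r)$ for $r$ small; what remains is the finite-dimensional system
\[
 F_i(q,r) := \int_{S^n} \bigl(H_{q,r,w(q,r)} - \bar c/r \bigr)\,\omega_i\,d\sigma(\omega) = 0, \qquad i = 1,\ldots, n+1.
\]

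The crucial step is to compute the leading behaviour of $F_i(q,r)$ as $r \to 0$. Using the expansion of $a(q,\omega)$ and the standard identities for integrals of polynomials against $\omega_i$ on $S^n$ (so that Ricci terms assemble into the scalar curvature), one finds, after cancellations, that
\[
 F_i(q,r) = c_n\, r^{3} \,\partial_i R(q) + O(r^4),
\]
where $R$ is the scalar curvature of $M$ and $c_n \neq 0$. Dividing by $r^3$, the reduced system becomes $\nabla R(q) + O(r) = 0$; since $p$ is a non-degenerate critical point of $R$, the implicit function theorem yields a unique smooth curve $q(r)$, with $q(0)=p$, solving it. This produces the family $S_r := \Sigma_{q(r),r,w(q(r),r)}$ of CMC spheres. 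Finally, to see that $\mathscr F = \{S_r\}$ is a foliation of a punctured neighbourhood of $p$, one observes that $S_r = \{\exp_p(r\,\omega) + O(r^2)\}$ is a small perturbation of the Euclidean sphere of radius $r$ centred at $p$, so for sufficiently small $\delta$ the map $(r,\omega)\mapsto $ the corresponding point of $S_r$ is a diffeomorphism onto its image. The main technical obstacle is the curvature expansion leading to the identity displayed above: getting the coefficient right (and verifying that lower-order contributions from $w(q,r)$ do not spoil it) requires bookkeeping of the metric expansion to the correct order, and it is at this point that the scalar curvature, rather than, say, the Ricci tensor, emerges as the obstruction to solvability.
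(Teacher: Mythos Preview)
The paper does not prove this theorem: Theorem~\ref{T.Ye} is quoted from Ye's 1991 paper as background and motivation, and no argument for it is given anywhere in the text. There is thus nothing in the paper to compare your proof against directly.

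That said, your outline is the standard Lyapunov--Schmidt reduction that Ye used, and it is precisely the scheme the paper adapts in Section~\ref{sect3} to prove its own main result (Theorem~\ref{mainintro}) for nonconstant prescribed mean curvature. The structure---parametrize by normal graphs, linearize to obtain $\Delta_{\S}+n$ with kernel $\mathcal K=\mathrm{span}\{x^i\}$, solve the $\mathcal K^\perp$ part by the implicit function theorem, then analyse the finite-dimensional reduced map---matches the paper's Lemmas~\ref{lemmau0}--\ref{lemmaPitilde} exactly. The one substantive difference between Ye's situation and the paper's is the order at which the obstruction appears: for nonconstant~$f$ the leading term of the reduced system is $\nabla f(p)$, which shows up already at order~$r$ (in the rescaled curvature $H[r,\tau,\cdot]$); in the CMC case that term is absent, so one must push the expansion one order further, and it is there that~$\nabla R$ emerges. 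You identify this mechanism correctly.

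One minor bookkeeping point: with your conventions (integrating the unrescaled mean curvature against~$\omega_i$), the Ricci term at order~$r$ is even in~$\omega$ and integrates to zero, so the first nonvanishing contribution to~$F_i$ comes from the next term and is of order~$r^2$, not~$r^3$. This does not affect the argument---after dividing by the correct power the implicit function theorem still applies---but it is worth getting right if you carry out the computation in detail.
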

%unique

Let us briefly turn to the study of overdetermined boundary value problems. This is a major topic in PDEs that appears in a number of mathematical and physical contexts such as fluid mechanics, spectral theory and isoperimetric inequalities, and where analogies with the theory of CMC hypersurfaces have played a major role in recent years~\cite{Ruiz17, Ruiz20, Sic09, Sic13, Sic15, Traizet, Ruiz22, delPino15}. The simplest setting for these problems is as follows. In Euclidean space~$\R^n$, one typically looks for solutions to a semilinear equation  in a domain~$\Om\subset\R^n$ for which one can prescribe both Dirichlet and Neumann boundary conditions:
\begin{align*}
	\Delta u + \lambda F(u)=0\qquad &\text{in }\Om\,,\\
u= 0\quad \text{and}\quad \partial_N u=-\bar c\qquad &\text{on }\partial\Om\,.
\end{align*}
Here $\lambda,\bar c$ are unspecified constants and $F$ is a reasonably well behaved function. A central result due to Serrin~\cite{Serrin} is that, by a clever adaptation of Alexandrov's moving plane method, positive solutions to a problem of this form on a smooth bounded domain must be radially symmetric, so the domain~$\Om$ must be a ball. The analogies go well beyond this fact. To mention just a few results, note that similar results can be proven on the hyperbolic space and on the round half-sphere~\cite{KP}, and that, despite the existence of various rigidity results~\cite{BCN, FV10, Ruiz17, Reichel , WW},  nontrivial solutions do exist on certain unbounded domains~\cite{Ruiz17, Ruiz20, Traizet} and on bounded domains when one drops the positivity condition~\cite{Ruiz22}.

In a general Riemannian manifold $(M,g)$ as above, and in particular when one considers position-dependent nonlinearities or nonconstant boundary data, our understanding of the problem is much more limited. In this context, one can take a well behaved nonlinearity $F:M\times\R\to\R$, boundary data $f_0,f_1:M\to\R$, and look at the problem 
\begin{align}\label{OBVP}
\begin{split}	
\Delta_g u + \lambda F(\cdot,u)=0\qquad &\text{in }\Om\,,\\
u= f_0\quad \text{and}\quad \partial_N u=-\bar c f_1\qquad &\text{on }\partial\Om\,.
\end{split}
\end{align}
on domains $\Om\subset M$. The first results in this direction~\cite{Sic09,Sic15} concern the case $F:=1$ (that is, the torsion problem on the manifold) with constant boundary conditions $(f_0,f_1):=(0,1)$, and ensure the existence of positive solutions to the overdetermined boundary problem on suitably deformed small geodesic balls centered at critical points of the scalar curvature under suitable technical conditions. For fairly general nonlinearities~$F:M\times\R\to\R$ and constant boundary conditions $(f_0,f_1):=(0,1)$, similar results were established in~\cite{AIM19}. These theorems are strongly motivated by the result of Ye on CMC hypersurfaces that we stated above. 

In the case of nonconstant boundary data, that is, for arbitrary functions $f_0$ and $f_1>0$, one can prove a similar result, although the strategy of the proof is surprisingly different. More precisely~\cite{APDE}, if $p$ is a non-degenerate critical point of the Dirichlet datum~$f_0$, under suitable technical hypotheses one can construct positive solutions to the overdetermined problem~\eqref{OBVP} on domains that are small, slightly perturbed balls centered at~$p$. The fact that there exist solutions to overdetermined problems with position-dependent coefficients and nonconstant data is not just a curiosity; in fact, these solutions have found applications in the study of incompressible fluids~\cite{ARMA21}.

Just as the results of~\cite{AIM19} can be seen as an analog of Ye's geometric theorem in the theory of overdetermined boundary problems, our goal in this paper is to close the circle by presenting an analog of the existence results for overdetermined problems with nonconstant data in the context of spheres with prescribed mean curvature. Specifically, we aim to prove the following result:

\begin{theorem} \label{mainintro}
Consider a positive function $f \in C^{m,\alpha}(M)$ with $m \geq 3$ and $\alpha \in (0,1)$. If $p \in M$ is a non-degenerate critical point of $f$, then a punctured neighborhood of~$p$ admits a $C^{m,\alpha}$ foliation  $\mathscr{F}:= \{S_r : 0 < r < \delta\}$ by spheres, of class $C^{m+2,\alpha}$, whose mean curvature is proportional to~$f$. Furthermore, the mean curvature of each~$S_r$ is $H = \overline{c} f/r$ for some constant $\overline{c}> 0$ and the foliation is smooth if $f \in C^{\infty}(M)$. 
\end{theorem}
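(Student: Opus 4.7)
My plan is to adapt the Lyapunov--Schmidt reduction of Ye~\cite{Ye1991}, but now driving the bifurcation equation with $\nabla f$ rather than with the gradient of the scalar curvature, which simplifies the required expansions. Since each leaf must shrink to $p$ as $r \to 0$ while its mean curvature satisfies $H \sim n/r$, the natural choice of constant is $\overline{c} := n/f(p)$. For $q$ in a normal coordinate chart near $p$ and $r > 0$ small, I parametrize candidate hypersurfaces as normal graphs over the geodesic sphere of radius $r$ centered at $q$,
$$\Sigma_{q,r,\phi} := \bigl\{\operatorname{exp}_q\bigl(r(1+\phi(\theta))\theta\bigr) : \theta \in \S\bigr\}, \qquad \phi \in C^{m+2,\alpha}(\S),$$
and look for zeros of the nonlinear operator
$$\F(q, r, \phi)(\theta) := r\, H_{\Sigma_{q,r,\phi}}(\theta) - \frac{n}{f(p)}\, f\bigl(\operatorname{exp}_q(r(1+\phi(\theta))\theta)\bigr) \qquad \text{on } \S.$$

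\textbf{Lyapunov--Schmidt reduction.} At $(q, r, \phi) = (p, 0, 0)$ the linearization of $\F$ in $\phi$ is, up to sign, the Jacobi operator of the unit round sphere, $L_0 \phi = -(\Delta_{\S}\phi + n\phi)$. Its kernel $K := \ker L_0 \subset C^{m+2,\alpha}(\S)$ is the $(n+1)$-dimensional space of first spherical harmonics, naturally identified with $\R^{n+1} \simeq T_p M$ by matching each coordinate function $\theta_i$ with the $i$-th normal coordinate vector. The operator $L_0$ is an isomorphism from $K^\perp \cap C^{m+2,\alpha}(\S)$ onto $K^\perp \cap C^{m,\alpha}(\S)$, with orthogonality in the standard $L^2$ pairing on $\S$. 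Requiring $\phi \in K^\perp$ kills the degeneracy coming from translations of the center, and the implicit function theorem applied to $\Pi_{K^\perp}\F(q, r, \phi) = 0$ produces a unique small solution $\phi = \phi(q, r) \in K^\perp$, of class $C^{m+2,\alpha}$ on $\S$ and depending $C^{m,\alpha}$ on $(q, r)$.

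\textbf{Bifurcation equation.} It remains to solve the finite-dimensional equation
$$G(q, r) := \Pi_K \F\bigl(q, r, \phi(q, r)\bigr) = 0 \qquad \text{in } K \simeq T_p M.$$
A direct computation gives $\F(q, 0, 0) = n\,(1 - f(q)/f(p))$, which is constant in $\theta$ and hence orthogonal to $K$, so $G(q, 0) \equiv 0$; the first-order Taylor coefficient in $r$ comes purely from differentiating $f$ along the radial geodesic from $q$, producing $\partial_r \F(q, 0, 0)(\theta) = -(n/f(p))\, \nabla f(q) \cdot \theta$, which already lies in $K$. Dividing by $r$ therefore yields
$$\widetilde G(q, r) := \frac{G(q, r)}{r} = -\frac{n}{f(p)}\, \nabla f(q) + O(r),$$
with a $C^{m-1,\alpha}$ remainder. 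Since $\widetilde G(q, 0) = 0$ exactly when $q$ is a critical point of $f$, and the Jacobian $\partial_q \widetilde G(p, 0) = -(n/f(p))\,\Hess_p f$ is invertible by nondegeneracy, the implicit function theorem furnishes a unique curve $q(r)$ near $p$ with $q(0) = p$ solving $\widetilde G(q(r), r) = 0$. The hypersurface $S_r := \Sigma_{q(r), r, \phi(q(r), r)}$ then satisfies $H_{S_r} = \overline{c} f/r$ by construction.

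\textbf{Foliation, regularity and main obstacle.} The family $\{S_r : 0 < r < \delta\}$ foliates a punctured neighborhood of $p$ because the parametrization $(r, \theta) \mapsto \operatorname{exp}_{q(r)}\bigl(r(1 + \phi(q(r), r)(\theta))\theta\bigr)$ is a higher-order $C^{m,\alpha}$ perturbation of the standard polar map on a punctured ball, whose Jacobian is uniformly nonzero for small $r$. Regularity of the leaves and of the foliation is inherited from $f$ through the implicit function theorem, and the $C^\infty$ case follows by running the argument at every $C^{k,\alpha}$ and invoking uniqueness. The main technical obstacle is establishing the uniform expansion $r\,H_{\Sigma_{q,r,\phi}} = n + L_0\phi + r\,\mathcal{R}(q, r, \phi)$ with a controlled $C^{m,\alpha}$ remainder: this requires working carefully in normal coordinates at the variable center $q$ and tracking how the coefficients of the metric expansion depend on $q$ as $q \to p$ in a way strong enough to legitimize the implicit function arguments above.
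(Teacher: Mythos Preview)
Your proposal is correct and follows essentially the same Lyapunov--Schmidt route as the paper: both fix $\overline c=n/f(p)$, solve the $K^\perp$-projected equation for the graph function via the implicit function theorem (invertibility of $\Delta_{\S}+n$ on $K^\perp$), and then close the finite-dimensional bifurcation equation on $K$---whose leading term is $-\tfrac{n}{f(p)}\nabla f(q)$ with Jacobian $-\tfrac{n}{f(p)}\Hess_p f$---by a second implicit function theorem, followed by the same polar-map argument for the foliation property. The only cosmetic differences are that the paper scales the graph as $r^2u$ (so that the $\Pi^\perp$-equation, divided by $r^2$, has a nontrivial limiting solution $u_0$ at $r=0$ determined by the Ricci tensor and $\Hess_p f$) and evaluates $f$ on the unperturbed geodesic sphere rather than on the graph itself; neither changes the analysis at the orders that matter.

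One small point to tighten: your justification of $G(q,0)\equiv 0$ computes $\Pi_K\F(q,0,0)$ rather than $\Pi_K\F(q,0,\phi(q,0))$. The conclusion is still correct---at $r=0$ the equation forces $\phi(q,0)$ to be the constant $f(p)/f(q)-1$, and then $\F(q,0,\phi(q,0))$ vanishes identically---but the argument as written skips this step.
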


We also show that, if $p$ is not a critical point of $f$, then such foliation cannot exist. Also, as happens in the classical setting considered in~\cite{Ye1991}, the foliation obtained in Theorem \ref{mainintro} is essentially unique. A precise statement of these facts is given in Theorem~\ref{uniquenessAndPpoint}.

Note that, contrary to what happens in the setting of CMC spheres, the critical points that play a role in the theorem are not those of the scalar curvature of the manifold, but those of the function~$f$ which describes the mean curvature. It is clear that this foliation is usually not given by the level sets of~$f$: if it were the case, the hypersurfaces $S_r$ would have constant mean curvature, so it is classical~\cite{Ye1991} that $p$ would then necessarily be a critical point of the scalar curvature of~$(M,g)$. It is also worth pointing out that the somewhat anisotropic regularity of the foliation, that is, the fact that the leaves of the foliation have two derivatives more that the foliation itself, is a consequence of the additional regularity gained by means of the second order elliptic equation relating the function~$f$ and the hypersurface. In the normal direction, no such effect is to be expected.

As in other papers in the literature~\cite{PX,Sic15}, it is possible to relax the assumption that there exists a non-degenerate critical point. The price one has to pay is that the spheres we obtain do not necessarily define a foliation, and that one cannot prove uniqueness. The way we have chosen to present this result is by considering a mean curvature function~$f$ with a critical point that is isolated but possibly degenerate. Our technical hypothesis is then that the index of the gradient field defined by~$f$ at this point must be nonzero, which eventually enables us to replace the implicit function theorem in the proofs by topological degree arguments.

\smallbreak 
\begin{theorem} \label{mainintro-index}
Consider a positive function $f \in C^{m,\alpha}(M)$ with $m \geq 3$ and $\alpha \in (0,1)$. If $p \in M$ is an isolated critical point of $f$ and the index of the gradient field $\nabla^g f$ at~$p$ is nonzero, then in a neighborhood of~$p$ there is a family  of spheres of class~$C^{m+2,\alpha}$, $ \{S_r : 0 < r < \delta\}$, such that the mean curvature of each~$S_r$ is  $H=\overline{c} f/r$ for some constant $\overline{c}> 0$. The spheres are smooth if $f\in C^\infty(M)$.
\end{theorem}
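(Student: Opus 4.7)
The plan is to follow the same Lyapunov--Schmidt scheme used in the proof of Theorem~\ref{mainintro}, and replace only the concluding application of the implicit function theorem by a Brouwer degree argument. The nondegeneracy of~$p$ is used in Theorem~\ref{mainintro} solely to invert the bifurcation equation and to obtain the uniqueness/foliation properties, so once that equation is in place the modification is essentially formal; this is why the present statement abandons the foliation claim but keeps the existence claim.

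Concretely, I would first recall the geometric setup. For a point~$q$ close to~$p$ and $r>0$ small, a candidate hypersurface~$S_{q,r,w}$ is obtained as a normal graph, over the geodesic sphere of radius~$r$ centered at~$q$, of a small function $w\in C^{m+2,\alpha}(\mathbb{S}^n)$ expressed in geodesic normal coordinates around~$q$. Imposing the prescribed mean curvature condition $H(S_{q,r,w})=\bar c\,f/r$ yields a nonlinear elliptic equation $\mathcal{N}(w,q,r,\bar c)=0$ on $\mathbb{S}^n$, whose linearization at $r=w=0$ is, up to a harmless rescaling, the Jacobi operator $L_0=\Delta_{\mathbb{S}^n}+n$. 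The kernel of $L_0$ is spanned by the $n+1$ coordinate functions together with the constants; these modes correspond respectively to translating the center~$q$ and to rescaling the radius against the free parameter~$\bar c$.

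The Lyapunov--Schmidt procedure, carried out exactly as in Theorem~\ref{mainintro}, then splits $w=w^{\perp}+w^{\parallel}$ $L^2$-orthogonally to $\ker L_0$, solves the orthogonal projection of the equation for $w^{\perp}=w^{\perp}(q,r,\bar c)$ by the standard implicit function theorem, and fixes $\bar c=\bar c(q,r)$ from the compatibility condition associated with the constant mode. What remains is a finite-dimensional reduced equation $\Phi(q,r)=0$ for the center~$q$ which, as in~\cite{Ye1991} and in Theorem~\ref{mainintro}, admits an asymptotic expansion of the form
\begin{equation*}
\Phi(q,r) = c_n\, r^{k}\, \nabla^g f(q) + R(q,r),
\end{equation*}
with $c_n\neq 0$, a fixed integer $k\geq 1$, and $R(q,r)=o(r^k)$ in the $C^0$ topology uniformly for $q$ ranging over a fixed small geodesic ball~$\overline{B_\rho(p)}$.

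To finish, I would invoke Brouwer degree. Since~$p$ is an isolated critical point of~$f$ with nonzero index, one can choose $\rho>0$ so small that $\nabla^g f$ does not vanish on $\partial B_\rho(p)$ and $\deg(\nabla^g f,B_\rho(p),0)$ coincides with the (nonzero) index. The uniform expansion above implies that, for all sufficiently small $r>0$, the rescaled map $r^{-k}\Phi(\cdot,r)$ is $C^0$-close to $c_n\nabla^g f$ on $\overline{B_\rho(p)}$; in particular it is nonvanishing on $\partial B_\rho(p)$ and homotopic to $c_n\nabla^g f$ through nonvanishing boundary data, so it has the same nonzero degree and therefore admits a zero $q_r\in B_\rho(p)$. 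The corresponding hypersurface $S_r:=S_{q_r,r,w^{\perp}(q_r,r,\bar c(q_r,r))}$ is a $C^{m+2,\alpha}$ sphere whose mean curvature equals $\bar c\,f/r$; the claimed regularity and the smooth case follow by standard elliptic bootstrapping for the mean curvature equation. The main obstacle I expect is ensuring that the remainder $R(q,r)$ is controlled uniformly on the fixed ball $\overline{B_\rho(p)}$, rather than only on the shrinking neighborhood of~$p$ needed in the nondegenerate setting; this uniformity should nevertheless follow from the same local estimates used in Theorem~\ref{mainintro}, applied uniformly in~$q$ on a compact set.
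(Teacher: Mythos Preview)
Your overall strategy coincides with the paper's: run the same Lyapunov--Schmidt reduction as in Theorem~\ref{mainintro}, and then replace the implicit function theorem at the finite-dimensional step by a Brouwer degree argument tied to the index of $\nabla^g f$ at~$p$. Two technical slips should be fixed.

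First, the kernel of $L_0=\Delta_{\mathbb{S}^n}+n$ does \emph{not} contain the constants; it is spanned exactly by the restrictions of the linear functions. The constant $\bar c$ is therefore not fixed by a ``compatibility condition on the constant mode'' but is simply chosen as $\bar c=n/f(p)$ so that the leading term $H-\bar c\,f=n-\bar c f(p)+O(r)$ vanishes at $r=0$; after this normalization the reduction proceeds with kernel $\mathcal K=\mathrm{span}\{x^i|_{\mathbb S^n}\}$ only, exactly as in Lemmas~\ref{lemmau0}--\ref{lemmaPiperp}. Second, the leading part of the reduced map is not literally $c_n\,\nabla^g f(q)$. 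In normal coordinates at $p$, the paper obtains (after the $1/r$ rescaling) $G(0,\tau)=B(\tau)\,D_\tau f(\tau)$, where $B(\tau)$ comes from the parallel-transported frame, whereas $\nabla^g f=g^{-1}(\tau)\,D_\tau f(\tau)$; these agree at $\tau=0$ but not for $\tau\neq0$. One therefore needs the short homotopy $s\mapsto\big[(1-s)B(\tau)+s\,g^{-1}(\tau)\big]D_\tau f(\tau)$, through invertible matrix factors on a small ball $\overline{\mathbb B_{r_3}}$, to conclude $\deg(G(0,\cdot),\mathbb B_{r_3},0)=\deg(\nabla^g f,\mathbb B_{r_3},0)\neq0$, after which continuation in $r$ gives the zeros $\overline\tau(r)$ as you describe. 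Your worry about uniformity of the remainder on a fixed ball is handled in the paper precisely by working with $G(r,\tau)$, which is $C^{m,\alpha}$ jointly in $(r,\tau)$ on $[0,r_0)\times\mathbb B_{r_0}$.
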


%\begin{remark} $ $
%\begin{itemize}
%\item[a)]  There is no loss of generality in assuming that $f(p) \neq 0$. In such a case, the constant $\overline{c} \in \R \setminus \{0\}$ is explicitly given by $\overline{c} = \overline{c}(p) := n/f(p)$. On the other hand, if $f(p) = 0$, we can replace $f$ by $\widetilde{f} = f+1$ and the prescribed mean curvature of $S_r$ will be $H = (n/r) \widetilde{f}$. 
%\item[b)] More detailed statements of these results can be found respectively in Sections \ref{sect3} and \ref{sect4}, see Theorems \ref{mainresult} and \ref{mainresult-index}.
%\end{itemize}
%\end{remark}

The paper is organized as follows. In Section~\ref{S.prelim}, we start by establishing preliminary geometric results and by recalling certain asymptotic expansions worked out by Ye that are useful in subsequent developments. For the benefit of the reader, we have maintained Ye's notation throughout the paper. The proof of Theorem~\ref{mainintro}, which is the main result of the paper, is presented in Section~\ref{sect3}. While the calculations are necessarily very different, the philosophy of the proof closely follows that of~\cite{APDE}. In Section \ref{sect3}, we also show that the foliation obtained in Theorem \ref{mainintro} is unique within the class of foliations we are considering and that, if $p$ is not a critical point of $f$, such foliation cannot exist. To conclude, in Section~\ref{sect4}, we present the proof of Theorem~\ref{mainintro-index}.

\section{Preliminaries}
\label{S.prelim}

In this section, we introduce some notation and establish some technical results that will be useful later on. We try to follow the notation used in~\cite{Ye1991} and~\cite[Section 2]{MePe2022} as closely as possible.
  
We denote the ball centered at $x_0\in\R^{n+1}$ of radius~$r$ by $\mathbb{B}_r(x_0) := \{x \in \R^{n+1}: |x-x_0| < r \}$. We also set $\mathbb{B}_r := \mathbb{B}_r(0)$. Here, $|\cdot|$ is the standard Euclidean norm in $\R^{n+1}$. Also, for $r > 0$, $\alpha_r: \R^{n+1} \to \R^{n+1}$ denotes the  homothety $\alpha_r(x):=rx$.
  
As in the Introduction, let $(M,g)$ be a smooth Riemannian manifold without boundary of dimension $n+1$, with $n \geq 1$, and let $f: M \to \R$ be a positive function of class $C^{m,\alpha}$ with $m \in \N$, $m \geq 3$, and $\alpha \in (0,1)$.  Given a point $p\in M$, let $R_p$ be the injectivity radius of~$(M,g)$ at the point~$p$ and let $r_p:=R_p/8$. Now, take vector fields $\{e_1, \ldots, e_{n+1}\}$ that are an orthonormal basis of $T_{p'}M$ for all $p'$ in a neighborhood of~$p$ and define the map
\begin{equation} \label{ctau}
c: \mathbb{B}_{R_p} \to M, \qquad \tau \mapsto {\rm{exp}}_p( \tau^i e_i),
\end{equation}
(which depends on $e_i(p)$ but is independent of the behavior of the vector fields $e_i$ away from~$p$). Here, and in the rest of the paper, we use the Einstein summation convention. 
This map determines a Riemannian normal coordinate system centered at $p$. For each $\tau \in \mathbb{B}_{r_p}$, we then define the map
\begin{equation} \label{phitau}
\varphi_{\tau}: \mathbb{B}_{2r_p} \to M, \qquad x \mapsto {\rm{exp}}_{c(\tau)}(x^i e_i^{\tau}),
\end{equation}
where $e_i^{\tau}$ is the parallel transport of $e_i(p)$ from~$p$ to the point~$c(\tau)$ along the geodesic $c(t\tau)|_{0\leq t \leq 1}$.  For future reference, let us record that, for each $\tau \in \mathbb{B}_{r_p}$, $\varphi_{\tau}$ gives rise to a Riemannian normal coordinate system centered at $c(\tau)$. We denote by $g_{ij}^{\tau}$ the coefficients of the metric tensor $g$ in this coordinate system. Similarly, we denote by $R_{ij}^{\tau}$ the coefficients of the Ricci tensor in the same coordinate system. Let us also note that 
$$
\varphi_0(x) = {\rm{exp}}_p(x^i e_i) = c(x), \quad \textup{for all } x \in \mathbb{B}_{2r_p} \subset \mathbb{B}_{R_p}.
$$

Also, throughout the paper, given a vector field $v$, $vf$ denotes the action of $v$ (as a first order differential operator) on a function~$f$.

Now, let $\nu$ be the inward unit normal to $\S$ and let $u \in C^2(\S)$. We denote by 
$$
S^{n}_{u}:= \{x + \nu(x) u(x): x \in \S\},
$$ 
the normal graph of $u$ over $\S$, and we define the surface 
\begin{equation} \label{Srtauu}
S_{r,\tau,u}:= \varphi_{\tau}(\alpha_r(S_{u}^n)).
\end{equation}
Note that $S_{r,\tau,0} = S_r(c(\tau))$ is the geodesic sphere of center $c(\tau) \in M$ and radius $r > 0$ and that there exists $\delta_0 > 0$ such that, if $\|u\|_{C^1(\S)} < \delta_0$, then $S_u^n$ is an embedded $C^2$ surface in $\R^{n+1}$. Moreover, the surface given in \eqref{Srtauu} is parametrized by the map
\begin{equation}
\varphi_{r,\tau,u}: \S \to S_{r,\tau,u},  \qquad x \mapsto {\rm{exp}}_{c(\tau)}\big(r(x^{i} + \nu^{i}(x) u(x)) e_i^{\tau} \big).
\end{equation}

Finally, for $0 < r < r_p$, $\tau \in \mathbb{B}_{r_p}$, $u \in C^2(\S)$ with $\|u\|_{C^1(\S)} < \delta_0$ and $x \in \S$, we define
\begin{equation} \label{Hdef1}
H[r,\tau,u](x) := \textup{ the inward mean curvature of }S_u^n\textup{ at } x + u(x) \nu(x) \textup{ w.r.t. } \overline{g} \textup{ on } \mathbb{B}_2,
\end{equation}
where $\overline{g} := r^{-2} \alpha_{r}^{*}(\varphi_\tau^*(g))$. Note that $\overline{g}$ extends smoothly to $r = 0$ with $\overline{g}\,|_{r=0} =: g_0$ the standard Euclidean metric. Also, by the scaling of the mean curvature, it follows that
\begin{equation} \label{Hdef2}
H[r,\tau,u](x) = r \; \textup{times the inward mean curvature of } S_{r,\tau,u} \textup{ at } \varphi_{r,\tau,u}(x) \textup{ w.r.t. } g . 
\end{equation}
We refer to \cite{Ye1991} for more details concerning \eqref{Hdef1}-\eqref{Hdef2}. 

  We need asymptotic expansions of the function $H$ and 
  \begin{equation}
F[r,\tau](x): \S \to \R, \qquad x \mapsto f(\varphi_{\tau}(rx)).
\end{equation}
These expansions play a key role in the proof of our main results.

%Having at hand all this notation, the proof of Theorem \ref{mainintro} is reduced to find $\tau = \tau(r,u)$ and $u = u(r)$ such that
%$$
%H[r,\tau(r), r^2 u(r)] = \frac{N}{f(p)} F[r,\tau(r)](x),  \quad \textup{ in }
%$$
%\vspace{2cm}
%and emphasize that, for each $\tau \in \R^{N+1}$ and $0 < r < r_p$, the map $\varphi_{\tau} \circ \alpha_r$ gives rise to a Riemannian normal coordinate system centered at $c(\tau)$. Note that the metric $g$ in this coordinates satisfies
%$$
%g_{ij} = r^2 \big( \delta_{ij} + \sigma_{ij}(rx)\big)
%$$
%with $\sigma_{ij}$ satisfying $|\sigma_{ij}(x)| \leq \|x\|^2$ for all $i,j$. We also consider the auxiliary manifold $(\mathbb{B}_{2r_p}, g_{\tau,r}=r^{-2} \alpha_r^*(\varphi_{\tau}^*(g))$. Its metric is conformal to $g$ ion the coordinates 

\begin{lemma} \label{lemmaExpansion}
For $0 < r\ll1$, $\tau \in \mathbb{B}_{r_p}$, $u \in C^2(\S)$ with $r^2 \|u\|_{C^1(\S)} < \delta_0$,  $x \in \S$ and $\ell \in \{1, \ldots, n+1\}$,  one has:
\begin{align}
& \bullet\  H[r,\tau,r^2 u](x) = n + \Big( (\Delta_{\S}+n) u(x) -\frac13 R_{ij}^{\tau}(0) x^i x^j \Big)r^2 + O(r^3),\label{Hexpansion}  \\
& \bullet\  F[r,0](x) = f(p) + e_i f(p) x^i r + \frac12 e_i e_j f(p) x^i x^j r^2 + O(r^3), \label{Ftau0expansion} \\
 \label{Fpartialtau0expansion}
& \bullet\  \frac{\partial}{\partial \tau^\ell} \Big( F[r,\tau](x) \Big)\Big|_{\tau = 0} = e_\ell f(p) + e_\ell e_i f(p) x^i r + \frac12 e_\ell e_i e_j f(p) x^i x^j r^2 + O(r^3).  
\end{align}
\end{lemma}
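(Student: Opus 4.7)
The plan is to treat the three expansions separately. Formula~\eqref{Hexpansion} is a geometric computation of the rescaled mean curvature---essentially the content of Ye's analysis; see also \cite[Section~2]{MePe2022}---while \eqref{Ftau0expansion}--\eqref{Fpartialtau0expansion} are Taylor expansions of~$f$ and of its variation in the basepoint.

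For \eqref{Hexpansion}, I would work in the normal chart centered at $c(\tau)$, in which the metric admits the standard expansion $g_{ij}^\tau(y) = \delta_{ij} + \tfrac13 R_{ikjl}^\tau(0)\, y^k y^l + O(|y|^3)$; consequently the rescaled metric $\bar g := r^{-2}\alpha_r^*(\varphi_\tau^* g)$ is a smooth perturbation of the Euclidean metric $g_0 = \bar g|_{r=0}$, of order~$r^2$ on $\mathbb{B}_2$. Writing the inward mean curvature of the normal graph of $r^2 u$ over~$\S$ in the metric~$\bar g$, the Euclidean contribution gives $n$ at leading order plus $r^2$ times the linearization $(\Delta_{\S}+n)u$ of the mean curvature operator at the round sphere, while the $O(r^2)$ deviation of $\bar g$ from $g_0$ produces the Ricci term $-\tfrac13 R_{ij}^\tau(0) x^i x^j$; quadratic-in-$u$ remainders (which appear multiplied by $r^4$) and cubic-in-$y$ contributions of the metric expansion are absorbed in the $O(r^3)$ remainder.

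Formula~\eqref{Ftau0expansion} is a direct degree-two Taylor expansion: $F[r,0](x) = f(\exp_p(r x^i e_i))$ coincides with $f$ read in the normal chart $\varphi_0$ at~$p$ and evaluated at~$rx$, with partial derivatives at $0$ interpreted as iterated actions of the frame~$\{e_i\}$ on~$f$ at~$p$. For~\eqref{Fpartialtau0expansion} I would start from $F[r,\tau](x) = f(\exp_{c(\tau)}(r x^i e_i^\tau))$ and expand in~$r$ at the basepoint $c(\tau)$ using the parallel-transported frame~$\{e_i^\tau\}$, obtaining
\[F[r,\tau](x) = f(c(\tau)) + r\, x^i\, df|_{c(\tau)}(e_i^\tau) + \tfrac{r^2}{2}\, x^i x^j\, \Hess(f)|_{c(\tau)}(e_i^\tau, e_j^\tau) + O(r^3).\]
Differentiating each coefficient in~$\tau^\ell$ at $\tau=0$ and using that $\nabla_{c'(0)} e_i^\tau|_{\tau=0} = 0$ (by the very definition of parallel transport), the three terms produce the iterated frame derivatives $e_\ell f(p)$, $e_\ell e_i f(p)$, and $e_\ell e_i e_j f(p)$ claimed in the statement.

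The main bookkeeping step is this last one: one has to track carefully the difference between iterated covariant derivatives of $f$ at~$p$ and iterated actions of the chosen frame~$\{e_i\}$, which are related by Christoffel-type corrections, and verify that the choice of frame used in~\eqref{ctau}--\eqref{phitau} makes these corrections either vanish at~$p$ or be absorbed in the $O(r^3)$ remainder. The uniformity of all the remainders on $\mathbb{B}_{r_p}\times\S$ then follows from standard Taylor-remainder estimates combined with the $C^{m,\alpha}$ regularity of~$f$ for $m\geq 3$.
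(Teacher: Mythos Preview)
Your proposal is correct and follows essentially the same route as the paper's proof. The paper simply invokes \cite[Formula~(1.19)]{Ye1991} for~\eqref{Hexpansion}, whereas you sketch the underlying computation; for~\eqref{Ftau0expansion}--\eqref{Fpartialtau0expansion}, both you and the paper first derive the general expansion $F[r,\tau](x)=f(c(\tau))+e_i^{\tau} f(c(\tau))\,x^i r+\tfrac12 e_i^{\tau} e_j^{\tau} f(c(\tau))\,x^i x^j r^2+O(r^3)$, then specialize to $\tau=0$ and differentiate in $\tau^\ell$, using that $e_i^\tau$ is parallel along radial geodesics from~$p$.
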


\begin{proof}
First of all, note that \eqref{Hexpansion} immediately follows from \cite[Formula (1.19)]{Ye1991}. On the other hand, let us fix arbitrary $\tau \in \mathbb{B}_{r_p}$ and $x \in \S$. Taylor expanding $F[r,\tau](x)$ in $r$, we get that
\begin{equation*}
F[r,\tau](x)  = F[0,\tau](x) + \frac{\partial}{\partial r} \Big( F[r,\tau](x) \Big) \Big|_{r=0} r + \frac12 \frac{\partial^2}{\partial r^2}\Big( F[r,\tau](x) \Big) \Big|_{r=0} r^2 + O(r^3).
\end{equation*}
Now, let $\gamma_{x^i e_i^{\tau}}$ be the geodesic starting at $c(\tau)$ with initial velocity $x^i e_i^{\tau}$. Taking into account the properties of normal coordinates (see e.g. \cite[Proposition 5.24]{Lee2018}) and the definition of $F$, we get
\begin{align*}
& \frac{\partial}{\partial r} \Big( F[r,\tau](x) \Big) \Big|_{r=0} = \frac{\partial}{\partial r} \Big( f\big(\gamma_{x^i e_i^{\tau}}(r)\big) \Big) \Big|_{r=0} = \bigg( \Big( g^{\tau^{ij}} \Big|_{\gamma_{x^i e_i^{\tau}(r)}} e_i^{\tau} f \big(\gamma_{x^i e_i^{\tau}}(r)\big)\Big)e_j^{\tau} \cdot \dot{\gamma}_{x^i e_i^{\tau}}(r)  \bigg) \bigg|_{r= 0} \\
& \qquad = \bigg( \Big( g^{\tau^{ij}} \Big|_{\gamma_{x^i e_i^{\tau}(r)}} e_i^{\tau} f \big(\gamma_{x^i e_i^{\tau}}(r)\big)\Big)e_j^{\tau} \cdot \big(x^1, \ldots,x^{n+1}\big) \bigg) \bigg|_{r= 0} = e_i^{\tau} f(c(\tau)) x^i.
\end{align*}
Arguing on the same way, we get that
$$
\frac{\partial^2}{\partial r^2}\Big( F[r,\tau](x) \Big) \Big|_{r=0} = e_i^{\tau} e_j^{\tau} f(c(\tau)) x^i x^j.
$$
Hence, it follows that
\begin{equation} \label{Ftaugeneralexpansion}
F[r,\tau](x)  = f(c(\tau)) + e_i^{\tau} f(c(\tau))x^i r + \frac12 e_i^{\tau} e_j^{\tau} f(c(\tau)) x^i x^j r^2 + O(r^3).
\end{equation}
Taking into account the definition of $c(\tau)$, \eqref{Ftau0expansion} immediately follows from \eqref{Ftaugeneralexpansion}. Finally, differentiating \eqref{Ftaugeneralexpansion}  with respect to $\tau_\ell$ for an arbitrary but fixed $\ell \in \{1,\ldots, n+1\}$, and evaluating at $\tau = 0$, we get \eqref{Fpartialtau0expansion}.
\end{proof}

The following elementary lemma will be very useful in what follows. Here $\Gamma(z)$ denotes the Gamma function.

\begin{lemma}[\hspace{-0.005cm}\cite{Fo2001}] \label{lemmaFolland}
Let  $\alpha_1, \ldots, \alpha_{n+1} $ be nonnegative integers and set $\beta_j := \frac12(\alpha_j +1)$. Then\smallskip
$$
\int_{\S} (\omega^1)^{\alpha_1} (\omega^2)^{\alpha_2} \cdots (\omega^{n+1})^{\alpha_{n+1}}\, d\sigma(\omega) = \left\{
\begin{aligned}
\, & 0, \quad && \textup{ if some $\alpha_j$ is odd,} \\ 
& \frac{2\Gamma(\beta_1)\Gamma(\beta_2) \cdots \Gamma(\beta_{n+1})}{\Gamma(\beta_1 + \beta_2 + \cdots \beta_{n+1})}, && \textup{ if all $\alpha_j$ are even.}
\end{aligned}
\right.
$$
\end{lemma}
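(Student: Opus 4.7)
The plan is to prove the formula by the classical trick of comparing two evaluations of the Gaussian integral
\[
I := \int_{\R^{n+1}} x_1^{\alpha_1} x_2^{\alpha_2} \cdots x_{n+1}^{\alpha_{n+1}}\, e^{-|x|^2}\, dx,
\]
one via Fubini in Cartesian coordinates and one via polar coordinates. The point is that the left-hand side of the statement appears as a factor when $I$ is written in polar coordinates, and the Gamma factors appear when $I$ is written in Cartesian coordinates; matching the two expressions yields the identity.

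First I would handle the case in which some exponent $\alpha_j$ is odd. By Fubini, $I$ factors as a product of one-dimensional integrals, and the factor corresponding to the odd $\alpha_j$ is an integral of an odd function against the even weight $e^{-x_j^2}$, hence vanishes. The polar representation of $I$ factors as a radial integral times the spherical integral on the left-hand side of the lemma, and the radial part is strictly positive, so the spherical integral itself must vanish. This handles the first case.

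In the case in which every $\alpha_j$ is even, the Cartesian factorisation gives
\[
I = \prod_{j=1}^{n+1} \int_{-\infty}^{\infty} x^{\alpha_j} e^{-x^2}\, dx,
\]
and each one-dimensional integral is computed by the substitution $u=x^2$, producing $\Gamma(\beta_j)$ with $\beta_j = (\alpha_j+1)/2$. On the other hand, setting $x=r\omega$ with $r\ge 0$ and $\omega\in\S$ gives
\[
I = \left(\int_0^\infty r^{\alpha_1+\cdots+\alpha_{n+1}+n}\, e^{-r^2}\, dr\right)\int_{\S} \omega_1^{\alpha_1}\cdots \omega_{n+1}^{\alpha_{n+1}}\, d\sigma(\omega),
\]
and the same change of variables $u=r^2$ evaluates the radial integral as $\tfrac{1}{2}\,\Gamma(\beta_1+\cdots+\beta_{n+1})$, since $(\alpha_1+\cdots+\alpha_{n+1}+n+1)/2 = \beta_1+\cdots+\beta_{n+1}$. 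Equating the two expressions for $I$ and solving for the spherical integral yields the displayed formula.

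There is no real obstacle here: the identity is standard and the only slightly delicate bookkeeping is making sure the exponent $(\alpha_1+\cdots+\alpha_{n+1}+n+1)/2$ matches $\sum_j \beta_j$, which is immediate. Hence this short computation suffices, and no further argument (e.g.\ induction on $n$ or representation-theoretic decomposition of harmonic polynomials) is needed.
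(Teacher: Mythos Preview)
Your argument is correct and is precisely the standard Gaussian-integral computation; the paper itself does not supply a proof but simply cites Folland~\cite{Fo2001}, whose proof is exactly the one you give. There is therefore nothing to compare or correct.
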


\section{The non-degenerate case: proof of Theorem \ref{mainintro}} \label{sect3}

The main goal of this section is to prove Theorem \ref{mainintro}. The heart of the proof is a Lyapunov--Schmidt reduction argument. 

Let us start by introducing some notation. It is well known that the first nonzero eigenvalue of the Laplacian on~$\S$ is~$n$, and that its corresponding eigenspace (that is, the kernel~$\mathcal K$ of the operator $\Delta_{\S} + n$) is spanned by the restriction to the sphere of the linear functions:
$$
\mathcal{K}:=  {\rm span} \{ x^i|_{\S}: 1 \leq i \leq n+1\} \subset C^{\infty}(\S) \,.
$$
We will omit the restriction to the sphere notationally when no confusion may arise. We henceforth denote by $\Pi$ the $L^2$-orthogonal projection onto $\mathcal{K}$. Therefore, given $h\in C^{0,\alpha}(\S)$, one has 
\begin{equation}
(\Pi h)(x)= \frac{n+1}{|\S|}\sum_{j=1}^{n+1} \left( \int_{\S} h(\omega) \omega^j d\sigma(\omega) \right) x^j.
\end{equation}
Also, let 
$
T : \mathcal{K} \to \R^{n+1}
$
be the isomorphism sending $x^i\in\mathcal K$ to the vector $\mathbf{e}_i$ of a Cartesian basis of $\R^{n+1}$, namely
$
T( a_i x^i ):= \sum_{i=1}^{n+1} a_i \mathbf{e}_i,
$
and set   
\[
\widetilde{\Pi}:= T \circ \Pi\,.
\]

Likewise, we define the complementary orthogonal projector $\Pi^{\perp}:= I-\Pi$ and define $\mathcal{K}^{\perp}_{\ell,\alpha}:= 
\Pi^{\perp}C^{\ell,\alpha}(\S)$, with $\ell \in \N \cup \{0\}$ and $\alpha \in (0,1)$. Therefore, $\mathcal{K}^{\perp}_{\ell,\alpha}$ is the space of $C^{\ell,\alpha}(\S)$ functions which are $L^2$-orthogonal to $\mathcal{K}$ and one has the decomposition
$$
C^{\ell,\alpha}(\S) = \mathcal{K} \oplus \mathcal{K}^{\perp}_{\ell,\alpha}.
$$
%Note that we choose the H\"older exponent $\alpha = \frac12$ for concreteness.  It does not play any important role in our arguments. 
% Recall that $\mathcal{K}$ is given by the restriction to $\S$ of the space of harmonic polynomials on $\R^{n+1}$ which are homogeneous of degree $1$. 

We start the proof of Theorem \ref{mainintro} with some preliminary lemmas.  Note that the first two will be also used to prove Theorem \ref{mainintro-index}. In the rest of this section, we fix $f: M \to \R$ to be a positive function of class $C^{m,\alpha}$ with $m \in \N$, $m \geq 3$, and $\alpha \in (0,1)$.

%We split the proof of Theorem \ref{mainintro} into several lemmas. Actually, the first two lemmas will be also used to prove Theorem \ref{mainintro-index}. 

\begin{lemma} \label{lemmau0}
Let $p \in M$. There exists a unique $u_0 \in \mathcal{K}^{\perp}_{m+2,\alpha} \cap C^{\infty}(\S)$ solving
\begin{equation} \label{equationu0}
(\Delta_{\S}+n) u_0(x) = \frac13 \Big( R_{ij}^0(0) + \frac{3n}{2f(p)} e_i e_j f(p) \Big) x^i x^j , \quad \textup{ in } \S. 
\end{equation}
\end{lemma}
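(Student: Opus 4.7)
The plan is to treat this as a standard Fredholm alternative problem for the self-adjoint elliptic operator $\Delta_{\S}+n$ on $\S$. Since the first nonzero eigenvalue of $-\Delta_{\S}$ is $n$ with eigenspace exactly $\mathcal K$, the kernel of $\Delta_{\S}+n$ acting on $C^{m+2,\alpha}(\S)$ is $\mathcal K$, and the cokernel coincides with $\mathcal K$ by self-adjointness. Hence for any fixed $h\in C^{m,\alpha}(\S)$ there exists a unique $u_0\in\mathcal K^\perp_{m+2,\alpha}$ solving $(\Delta_{\S}+n)u_0=h$ if and only if $h\in\mathcal K^\perp$, that is, $\int_{\S} h(\omega)\,\omega^k\,d\sigma(\omega)=0$ for every $k\in\{1,\dots,n+1\}$.

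The first step of the proof is therefore to verify this solvability condition for the particular right-hand side at hand, namely
\[
h(x):=\tfrac13\bigl(R_{ij}^0(0)+\tfrac{3n}{2f(p)}e_ie_jf(p)\bigr)x^ix^j=:A_{ij}x^ix^j,
\]
which is a homogeneous quadratic polynomial in $x$ with symmetric coefficients $A_{ij}$. The orthogonality condition becomes
\[
\int_{\S} A_{ij}\,\omega^i\omega^j\omega^k\,d\sigma(\omega)=0\qquad\text{for all }k,
\]
and every such integral is a sum of monomials of total degree three, so at least one of the three exponents in each monomial is odd; by Lemma~\ref{lemmaFolland} the integral vanishes. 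This confirms $h\in\mathcal K^\perp$, so existence and uniqueness of $u_0\in\mathcal K^\perp_{m+2,\alpha}$ follow immediately.

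Finally, for smoothness, the right-hand side $h$ is a polynomial and hence $C^\infty(\S)$. Elliptic regularity for $\Delta_{\S}+n$ on the compact manifold $\S$ then upgrades the $C^{m+2,\alpha}$ solution to an element of $C^\infty(\S)$. Alternatively, one can write an explicit formula by decomposing $h$ into its spherical harmonic components: since $|x|^2=1$ on $\S$, one has $h=a+Y_2$ with $a:=\operatorname{tr}(A)/(n+1)$ constant and $Y_2:=A_{ij}x^ix^j-a$ the traceless part, which is a degree-two spherical harmonic satisfying $\Delta_{\S}Y_2=-2(n+1)Y_2$. Since $(\Delta_{\S}+n)1=n$ and $(\Delta_{\S}+n)Y_2=-(n+2)Y_2$, the unique solution in $\mathcal K^\perp$ is $u_0=a/n-Y_2/(n+2)$, which lies in $C^\infty(\S)$ by inspection.

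There is no real obstacle here beyond the parity check: once one notices that $\dim\ker(\Delta_{\S}+n)=\dim\mathcal K$ and that the obstruction to solvability is the vanishing of three odd-degree moments, everything falls out of standard self-adjoint spectral theory on the sphere, and the smoothness is automatic because the source is polynomial.
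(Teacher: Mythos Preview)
Your proof is correct and follows essentially the same approach as the paper: verify that the right-hand side is $L^2$-orthogonal to $\mathcal K$ by observing, via Lemma~\ref{lemmaFolland}, that every integral $\int_{\S}\omega^i\omega^j\omega^k\,d\sigma(\omega)$ vanishes, and then invoke the Fredholm alternative for $\Delta_{\S}+n$. Your treatment is in fact more explicit than the paper's, since you also justify the $C^\infty$ regularity and even write down the closed-form solution $u_0=a/n-Y_2/(n+2)$, whereas the paper simply states that the result ``immediately follows'' once orthogonality is checked.
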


\begin{proof}
The result immediately follows if the right hand side in \eqref{equationu0} is $L^2$-orthogonal to $\mathcal{K}$. To show this is indeed the case, we just have to prove that the $L^2$-orthogonal projection of the right hand side in \eqref{equationu0} onto $\mathcal{K}$ is equal to zero. Note that
\begin{align*}
\Pi \left(  \frac13 \Big( R_{ij}^0(0) + \frac{3n}{2f(p)} e_i e_j f(p) \Big) x^i x^j \right) =  \frac{n+1}{3|\S|} \sum_{k=1}^{n+1} \left(  \Big( R_{ij}^0(0) + \frac{3n}{2f(p)} e_i e_j f(p) \Big) \int_{\S} \omega^{i} \omega^{j} \omega^{k} d\sigma(\omega) \right) x^k.
\end{align*}
On the other hand, by Lemma \ref{lemmaFolland}, we know that
$$
 \int_{\S} \omega^{i} \omega^{j} \omega^{k} d\sigma(\omega) = 0, \quad \textup{ for all } i,j,k \in \{1, \ldots, n+1\}.
$$
Thus, it follows that
$$
\Pi \left(  \frac13 \Big( R_{ij}^0(0) + \frac{3n}{2f(p)} e_i e_j f(p) \Big) x^i x^j \right) = 0.
$$
\end{proof}

\begin{lemma} \label{lemmaPiperp}
Let $p \in M$ be a critical point of $f$ and let  $u_0 \in \mathcal{K}^{\perp}_{m+2,\alpha}$  be as in Lemma \ref{lemmau0}. There exist a constant $r_0 \in (0,r_p)$ and a function $u \in C^{m,\alpha}([0,r_0)\times \mathbb{B}_{r_0}, \mathcal{K}^{\perp}_{m+2,\alpha})$ with $u(0,0) = u_0$ such that
\begin{equation}  \label{equationPiperp}
\Pi^{\perp} \left( \frac{1}{r^2} \Big(H[r,\tau,r^2 u(r,\tau)](x) - \frac{n}{f(p)} F[r,\tau](x) \Big) \right) = 0, \quad \textup{ in } \S. 
\end{equation}
\end{lemma}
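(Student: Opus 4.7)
The plan is to apply the implicit function theorem to the nonlinear map
\[
G \colon [0, r_0) \times \mathbb{B}_{r_0} \times \mathcal{K}^{\perp}_{m+2, \alpha} \longrightarrow \mathcal{K}^{\perp}_{m, \alpha}, \qquad G(r, \tau, u) := \Pi^{\perp}\!\left( \frac{1}{r^2}\bigl[H[r, \tau, r^2 u] - \tfrac{n}{f(p)} F[r, \tau]\bigr]\right),
\]
so that equation \eqref{equationPiperp} becomes the abstract equation $G(r, \tau, u) = 0$. I would then verify, in order, that (i) $G$ extends as a $C^{m, \alpha}$ map up to $r = 0$ on a sufficiently small neighbourhood of $(0, 0, u_0)$, (ii) $G(0, 0, u_0) = 0$, and (iii) the partial differential $D_u G(0, 0, u_0)$ is a topological isomorphism from $\mathcal{K}^{\perp}_{m+2, \alpha}$ onto $\mathcal{K}^{\perp}_{m, \alpha}$; the statement will then follow from the implicit function theorem, after possibly shrinking $r_0$.

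The technical crux is the smooth extension (i). Inserting the expansions of Lemma \ref{lemmaExpansion} yields, schematically,
\[
H[r, \tau, r^2 u] - \tfrac{n}{f(p)} F[r, \tau] \;=\; \underbrace{\Bigl(n - \tfrac{n f(c(\tau))}{f(p)}\Bigr)}_{\text{const.\ in } x} \;-\; \underbrace{\tfrac{n}{f(p)}\, e_i^{\tau} f(c(\tau)) \, x^i \, r}_{\in\, \mathcal{K}} \;+\; A_2(\tau, u)(x)\, r^2 + O(r^3),
\]
with $A_2(\tau, u) := (\Delta_{\S} + n) u - \tfrac{1}{3} R_{ij}^{\tau}(0) x^i x^j - \tfrac{n}{2 f(p)} e_i^{\tau} e_j^{\tau} f(c(\tau))\, x^i x^j$. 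Two features make the extension possible: the $O(r)$ contribution is purely linear in $x$, hence belongs to $\mathcal{K}$ and is annihilated by $\Pi^{\perp}$; and the constant-in-$x$ term is of order $|\tau|^2$ because $p$ is a critical point of $f$, so $\nabla^g f(p) = 0$ and consequently $f(c(\tau)) - f(p) = O(|\tau|^2)$. A careful joint Taylor expansion in $(r, \tau)$ then shows that, after dividing by $r^2$ and applying $\Pi^{\perp}$, the result extends $C^{m,\alpha}$-smoothly up to the boundary $\{r = 0\}$. This is the delicate point of the argument and the place where the critical point hypothesis is indispensable: without it, the constant-in-$x$ coefficient would produce an unavoidable $r^{-2}$ singularity in the projected equation.

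Granted the extension, checking (ii) and (iii) is essentially algebraic. Passing to the limit at $r = \tau = 0$ gives
\[
G(0, 0, u_0) = (\Delta_{\S} + n) u_0 - \Pi^{\perp}\!\Bigl(\tfrac{1}{3} R_{ij}^0(0) x^i x^j + \tfrac{n}{2 f(p)} e_i e_j f(p) \, x^i x^j\Bigr),
\]
which vanishes by the very defining equation \eqref{equationu0} of $u_0$. Differentiating in $u$, and using that $\Delta_{\S} + n$ is self-adjoint and therefore preserves the orthogonal decomposition $C^{\ell, \alpha}(\S) = \mathcal{K} \oplus \mathcal{K}^{\perp}_{\ell, \alpha}$, one obtains
\[
D_u G(0, 0, u_0)\, h = (\Delta_{\S} + n)\, h, \qquad h \in \mathcal{K}^{\perp}_{m+2, \alpha}.
\]
Since $\ker(\Delta_{\S} + n) = \mathcal{K}$ on $C^{m+2, \alpha}(\S)$, the restriction to $\mathcal{K}^{\perp}_{m+2, \alpha}$ is injective; by the Fredholm alternative together with Schauder estimates it is also surjective onto $\mathcal{K}^{\perp}_{m, \alpha}$, hence a Banach-space isomorphism. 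The implicit function theorem then supplies the desired $C^{m, \alpha}$ function $u(r, \tau)$ with $u(0, 0) = u_0$ and $G(r, \tau, u(r, \tau)) = 0$ on a neighbourhood of $(0, 0)$, which can be taken to be $[0, r_0) \times \mathbb{B}_{r_0}$ after possibly shrinking $r_0$, completing the proof.
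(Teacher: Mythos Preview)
Your overall strategy --- set up $G$, check $G(0,0,u_0)=0$, show that $D_uG(0,0,u_0)=(\Delta_{\S}+n)|_{\mathcal{K}^\perp_{m+2,\alpha}}$ is an isomorphism onto $\mathcal{K}^\perp_{m,\alpha}$, and invoke the implicit function theorem --- is exactly the paper's, and your verifications of (ii) and (iii) are correct and essentially identical to the paper's.

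Your argument for (i), however, misidentifies the role of the critical-point hypothesis. You claim that because $p$ is critical, the constant-in-$x$ piece $n\bigl(1-f(c(\tau))/f(p)\bigr)$ is $O(|\tau|^2)$, and that this is what allows the quotient by $r^2$ to extend to $\{r=0\}$, calling the hypothesis ``indispensable'' here. But $O(|\tau|^2)/r^2$ still fails to extend to $r=0$ for fixed $\tau\neq 0$, so the hypothesis does not cure the singularity you isolate. More to the point, the paper's proof of this lemma does \emph{not} use the critical-point hypothesis at all: it splits $G=G_1+G_2$ with
\[
G_1:=\Pi^\perp\Bigl(\tfrac{1}{r^2}\bigl(H[r,\tau,r^2u]-n\bigr)\Bigr),\qquad
G_2:=\Pi^\perp\Bigl(\tfrac{1}{r^2}\bigl(n-\tfrac{n}{f(p)}F[r,\tau]\bigr)\Bigr),
\]
observing that $G_1$ is $C^{\infty}$ in all variables by Ye's expansion (the substitution $u\mapsto r^2u$ is precisely what makes $H-n$ of order $r^2$), while $G_2$ is $C^{m,\alpha}$ and independent of $u$. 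The projection $\Pi^\perp$ annihilates the $O(r)$ linear-in-$x$ term $-\tfrac{n}{f(p)}e_i^\tau f(c(\tau))\,x^i$ regardless of whether the gradient of $f$ vanishes at $p$. The critical-point assumption enters only in the next step (Lemma~\ref{lemmaPitilde}), where it is needed to make the $\widetilde{\Pi}$-projected equation vanish at $(r,\tau)=(0,0)$. So drop the critical-point discussion from (i) and argue regularity via the $G_1+G_2$ decomposition instead.
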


\begin{proof}
First of all, taking into account Lemmas \ref{lemmaExpansion} and \ref{lemmau0}, we get that
$$
\begin{aligned}
& \Pi^{\perp} \left( \frac{1}{r^2} \Big(H[r,\tau,r^2 u](x) - \frac{n}{f(p)} F[r,\tau](x) \Big) \right)  \bigg|_{\substack{
r=0,\,\tau = 0 \\\hspace{-0.5cm} u=u_0}}  \\
& \hspace{6cm} = (\Delta_{\S} + n)u_0(x) - \frac13 \Big( R_{ij}^0(0) + \frac{3n}{2f(p)} e_i e_j f(p) \Big) x^i x^j = 0.
\end{aligned}
$$
On the other hand, it is immediate to check that the Fr\'echet derivative of the left hand side in \eqref{equationPiperp} with respect to $u$ satisfies 
$$
\begin{aligned}
& \frac{\partial}{\partial u} \left( \Pi^{\perp} \left( \frac{1}{r^2} \Big(H[r,\tau,r^2 u](x) - \frac{n}{f(p)} F[r,\tau](x) \Big) \right) \right) \Bigg|_{\substack{
r=0,\,\tau = 0 \\\hspace{-0.5cm} u=u_0}} = \big(\Delta_{\S} + n\big) \circ\Pi^{\perp},
\end{aligned}
$$
so it is a linear continuous invertible map from $\mathcal{K}^{\perp}_{m+2,\alpha}$ to  $\mathcal{K}^{\perp}_{m,\alpha}$. Also, note that
$$
\begin{aligned}
& \Pi^{\perp} \left( \frac{1}{r^2} \Big(H[r,\tau,r^2 u](x) - \frac{n}{f(p)} F[r,\tau](x) \Big) \right) \\
 & \quad =  \Pi^{\perp} \left( \frac{1}{r^2} \Big(H[r,\tau,r^2 u](x) - n \Big) \right) +  \Pi^{\perp} \left( \frac{1}{r^2} \Big(n - \frac{n}{f(p)} F[r,\tau](x) \Big) \right) =: G_1 [r,\tau,u](x) + G_2[r,\tau](x).
\end{aligned}
$$
We know that $G_1 \in C^{\infty}([0,r_p) \times \mathbb{B}_{r_p} \times \mathcal{K}_{m+2,\alpha}^{\perp},\, \mathcal{K}_{m,\alpha}^{\perp})$ and that $G_2 \in C^{m,\alpha}([0,r_p) \times \mathbb{B}_{r_p},\, \mathcal{K}_{m,\alpha}^{\perp})$. As the dependence in $u$ is smooth, we have uniform estimates in $C^{m,\alpha}$ in a neighborhood of $u_0 \in \mathcal{K}^{\perp}_{m+2,\alpha}$. Hence, the implicit function theorem in Banach spaces -- see for instance  \cite[Theorem 1.4]{AmMa2007} --  guarantees the existence of a constant $r_0 \in (0,r_p)$ and a function $u \in C^{m,\alpha}([0,r_0) \times \mathbb{B}_{r_0}, \mathcal{K}^{\perp}_{m+2,\alpha})$ with $u(0,0) = u_0$ such that 
\begin{equation*} 
\Pi^{\perp} \left( \frac{1}{r^2} \Big(H[r,\tau,r^2 u(r,\tau)](x) - \frac{n}{f(p)} F[r,\tau](x) \Big) \right) = 0, \quad \textup{ in } \S.
\end{equation*}
\end{proof}

\begin{lemma} \label{lemmaPitilde}
Let $p \in M$ be a non-degenerate critical point of $f$, let $u_0 \in \mathcal{K}^{\perp}_{m+2,\alpha}$ be as in Lemma \ref{lemmau0} and let $u \in C^{m,\alpha}([0,r_0) \times \mathbb{B}_{r_0}, \mathcal{K}^{\perp}_{m+2,\alpha})$ be as in Lemma \ref{lemmaPiperp}. There exist a constant $r_1 \in (0,r_0]$ and a function $\overline{\tau} \in C^{m,\alpha}([0,r_1),\R^{n+1})$ with $\overline{\tau}(0) = 0$ such that
\begin{equation} \label{equationPitilde}
\widetilde{\Pi} \left( \frac{1}{r} \Big(H[r,\overline{\tau}(r),r^2 u(r,\overline{\tau}(r))](x) - \frac{n}{f(p)} F[r,\overline{\tau}(r)](x) \Big) \right) = 0, \quad \textup{ in } \R^{n+1}. 
\end{equation}
\end{lemma}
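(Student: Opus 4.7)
The strategy is to apply the implicit function theorem to the map
\[
\Psi(r,\tau) := \widetilde{\Pi}\!\left(\frac{1}{r}\Big(H[r,\tau,r^2 u(r,\tau)](\cdot) - \frac{n}{f(p)}F[r,\tau](\cdot)\Big)\right),
\]
regarded as a $C^{m,\alpha}$ function from a neighbourhood of $(0,0)$ in $[0,r_0)\times\mathbb{B}_{r_0}$ into $\R^{n+1}$, with $r$ treated as the parameter and $\tau$ as the unknown. The first---and most delicate---step is to check that $\Psi$ extends $C^{m,\alpha}$ up to $r=0$ and to identify its leading-order expansion.

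By Lemma \ref{lemmaPiperp}, the $\mathcal K^{\perp}$-component of $H[r,\tau,r^2 u(r,\tau)] - \tfrac{n}{f(p)}F[r,\tau]$ is identically zero, so only its $\mathcal K$-component survives under $\widetilde{\Pi}$. Plugging in the expansion \eqref{Hexpansion} for $H$ and the general $\tau$-expansion of $F$ derived in the proof of Lemma \ref{lemmaExpansion},
\[
F[r,\tau](x) = f(c(\tau)) + e_i^{\tau} f(c(\tau))\, x^i\, r + \tfrac12 e_i^{\tau} e_j^{\tau} f(c(\tau))\, x^i x^j\, r^2 + O(r^3),
\]
I would observe that the $\mathcal K$-parts of $H$ at orders $r^0$, $r^1$, $r^2$ all vanish (since $(\Delta_{\S}+n)u\in \mathcal K^{\perp}$ and quadratic polynomials in $x$ lie in $\mathcal K^{\perp}$ by Lemma \ref{lemmaFolland}), and the constant and quadratic pieces of $F$ are likewise in $\mathcal K^{\perp}$. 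The only leading-order $\mathcal K$-contribution is therefore the linear-in-$x$ piece $-\tfrac{n}{f(p)} e_i^{\tau} f(c(\tau))\, x^i\, r$, which after dividing by $r$ and applying $\widetilde{\Pi}$ yields
\[
\Psi(r,\tau) = -\frac{n}{f(p)}\,\big(e_1^{\tau} f(c(\tau)),\ldots,e_{n+1}^{\tau} f(c(\tau))\big) + O(r^2),
\]
with all coefficients jointly $C^{m,\alpha}$ in $(r,\tau)$.

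Next I would evaluate $\Psi$ and its Jacobian in $\tau$ at the origin. Since $e_i^{0}=e_i$ and $c(0)=p$, the fact that $p$ is a critical point of $f$ yields $\Psi(0,0)=0$. Using formula \eqref{Fpartialtau0expansion}, the $\tau^j$-derivative of $e_i^{\tau}f(c(\tau))$ at $\tau=0$ equals $e_j e_i f(p)$, so
\[
\frac{\partial \Psi_i}{\partial\tau^j}(0,0) = -\frac{n}{f(p)}\, e_j e_i f(p) = -\frac{n}{f(p)}\,(\Hess f)_{ij}(p),
\]
which is invertible by the non-degeneracy hypothesis. Since $\Psi$ is $C^{m,\alpha}$, $\Psi(0,0)=0$, and $D_\tau\Psi(0,0)$ is an isomorphism of $\R^{n+1}$, the implicit function theorem in Banach spaces \cite[Theorem~1.4]{AmMa2007}---applied exactly as in the proof of Lemma \ref{lemmaPiperp}---produces $r_1\in(0,r_0]$ and a $C^{m,\alpha}$ map $\overline{\tau}:[0,r_1)\to\R^{n+1}$ with $\overline{\tau}(0)=0$ satisfying \eqref{equationPitilde}.

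The main technical difficulty is the first step: that the prefactor $\tfrac{1}{r}$ (not $\tfrac{1}{r^2}$ as in the previous lemma) still yields a finite, $C^{m,\alpha}$ limit at $r=0$. This rests on two separate cancellations: Lemma \ref{lemmaPiperp} erases every $\mathcal K^{\perp}$-contribution from the integrand, while the parity identities of Lemma \ref{lemmaFolland} force the surviving $\mathcal K$-contribution to begin only at order $r$, coming exclusively from the linear-in-$x$ term of the Taylor expansion of $F$. Correctly accounting for both cancellations is what produces the non-trivial Jacobian $-\tfrac{n}{f(p)}\Hess f(p)$ that drives the implicit function theorem.
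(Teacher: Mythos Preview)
Your proof is correct and follows essentially the same approach as the paper's: expand $H$ and $F$ via Lemma~\ref{lemmaExpansion}, use the parity identities of Lemma~\ref{lemmaFolland} together with $\widetilde{\Pi}\circ(\Delta_{\S}+n)=0$ to see that $\Psi(0,0)=-\tfrac{n}{f(p)}\sum_i e_i f(p)\,\mathbf e_i=0$ at the critical point, compute $\partial_{\tau^\ell}\Psi(0,0)=-\tfrac{n}{f(p)}\sum_i e_\ell e_i f(p)\,\mathbf e_i$ via \eqref{Fpartialtau0expansion}, and invoke the implicit function theorem using non-degeneracy. One small remark: your appeal to Lemma~\ref{lemmaPiperp} to ``erase every $\mathcal K^{\perp}$-contribution'' is harmless but unnecessary---$\widetilde\Pi$ already annihilates $\mathcal K^\perp$ by definition, so the only cancellation that actually matters for the finiteness of the $1/r$ limit is the parity argument from Lemma~\ref{lemmaFolland}.
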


\begin{proof}
First of all, note that, by Lemma \ref{lemmaFolland},
$$
\Pi\big( R_{ij}^{\tau}(0) x^i x^j \big) = 0 = \Pi \big( e_i e_j f(p) x^i x^j \big)  \quad \textup{ and  } \quad \Pi \big( e_i f(p) x^i \big) = e_i f(p) x^i.
$$
Also, observe that  $\widetilde{\Pi} \circ (\Delta_{\S} +n) = 0$. Combining \eqref{Hexpansion} and \eqref{Ftau0expansion} with this information and using that $p$ is a critical point of $f$, we get that
$$
\widetilde{\Pi} \left( \frac{1}{r} \Big(H[r,\tau,r^2 u(r,\tau)](x) - \frac{n}{f(p)} F[r,\tau](x) \Big) \right)  \bigg|_{\substack{r=0 \\ \tau = 0}} = -\frac{n}{f(p)} \sum_{i=1}^{n+1 } e_i f(p) \mathbf{e}_i = 0.
$$ 
On the other hand, taking into account \eqref{Fpartialtau0expansion}, we infer that, for all $\ell \in \{1,\ldots,n+1\}$,
$$
\frac{\partial}{\partial {\tau^\ell}} \left( \widetilde{\Pi} \left( \frac{1}{r} \Big(H[r,\tau,r^2 u(r,\tau)](x) - \frac{n}{f(p)} F[r,\tau](x) \Big) \right) \right)  \bigg|_{\substack{r=0 \\ \tau = 0}} = - \frac{n}{f(p)} \sum_{i=1}^{n+1} e_\ell e_i f(p) \mathbf{e}_i.
$$
Moreover, as a function of $\tau$, we have uniform estimates in $C^{m,\alpha}$ in a neighborhood of $\tau = 0$. Since $p$ is a non-degenerate critical point of $f$, using the implicit function theorem, we get a constant $r_1 \in (0,r_0]$ and a function $\overline{\tau} \in C^{m,\alpha}([0,r_1), \R^{n+1})$ with $\overline{\tau}(0) = 0$ such that
\begin{equation*} 
\widetilde{\Pi} \left( \frac{1}{r} \Big(H[r,\overline{\tau}(r),r^2 u(r,\overline{\tau}(r))](x) - \frac{n}{f(p)} F[r,\overline{\tau}(r)](x) \Big) \right) = 0, \quad \textup{ in } \R^{n+1}. 
\end{equation*}
\end{proof}

We now complete the proof of Theorem \ref{mainintro}. We provide here a more precise statement.

%\begin{definition}[Ye, 1991]
%A codimension $1$ foliation $\mathscr{F}$ of $U \setminus \{p\}$ for a neighbourhood $U$ of $p$ is called a \textit{foliation centered at $p$}, provided that its leaves are all closed. If furthermore 
%$$
%\sup_{S \in \mathscr{F}} \left( \sup_{S} |B_S| \diam S \right) < +\infty,
%$$
%where $B_S$ denotes the second fundamental form of $S$, $\mathscr{F}$ is called a \textit{foliation regularly centered at $p$}.
%\end{definition}

%Having at hand this definition, a more precise statement of the existence part in Theorem \ref{mainintro} reads as follows:

\begin{theorem} \label{mainresult}
Consider a positive function $f \in C^{m,\alpha}(M)$ with $m \geq 3$ and $\alpha \in (0,1)$. If $p \in M$ is a non-degenerate critical point of $f$, there exist a constant $\delta > 0$ and functions $\overline{\tau} \in C^{m,\alpha}([0,\delta],\, \R^{n+1})$ and $u \in C^{m,\alpha}([0,\delta],\, C^{m+2,\alpha}(\S))$, with $\overline{\tau}(0) = 0$, such that
\begin{equation} \label{equationmainresult}
H[r,\overline{\tau}(r), r^2 u(r)](x) = \frac{n}{f(p)} F[r,\overline{\tau}(r)](x), \quad \textup{ in } \S. 
\end{equation}
Hence, $\mathscr{F}:= \{S_r:= S_{r,\overline{\tau}(r),r^2 u(r)} : 0 < r < \delta\}$ is a family of prescribed mean curvature spheres of class $C^{m+2,\alpha}$ with $S_r$ having mean curvature $ H=(n/(rf(p))) f$. Moreover, $\mathscr{F}$ is a $C^{m,\alpha}$ foliation centered at $p$, and the foliation is smooth if $f\in C^\infty(M)$.
\end{theorem}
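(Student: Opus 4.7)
The strategy is a Lyapunov--Schmidt patching of Lemmas \ref{lemmaPiperp} and \ref{lemmaPitilde}, followed by a rescaling argument for the foliation property. Concretely, I would set $u(r) := u(r, \overline{\tau}(r))$, where $u(r, \tau)$ is the function from Lemma \ref{lemmaPiperp} and $\overline{\tau}(r)$ is the one from Lemma \ref{lemmaPitilde} (shrinking the domain so that $\overline{\tau}(r) \in \mathbb{B}_{r_0}$ on the full interval); by composition, $u \in C^{m,\alpha}([0,\delta),\, C^{m+2,\alpha}(\S))$ and $\overline{\tau} \in C^{m,\alpha}([0,\delta),\, \R^{n+1})$. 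Using the direct-sum decomposition $C^{m,\alpha}(\S) = \mathcal{K} \oplus \mathcal{K}^{\perp}_{m,\alpha}$ together with the fact that $\widetilde{\Pi}$ is a linear isomorphism from $\mathcal{K}$ onto $\R^{n+1}$, a function on $\S$ vanishes identically if and only if both its $\Pi^{\perp}$-component and its $\widetilde{\Pi}$-component vanish. Applied to $(H[r, \overline{\tau}(r), r^2 u(r)] - (n/f(p)) F[r, \overline{\tau}(r)])/r^2$, the $\Pi^{\perp}$-part vanishes by Lemma \ref{lemmaPiperp}, while the $\widetilde{\Pi}$-part (after multiplication by $r$) vanishes by Lemma \ref{lemmaPitilde}, giving \eqref{equationmainresult}. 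The mean curvature formula then follows from \eqref{Hdef2} together with the identity $F[r, \overline{\tau}(r)](x) = f(\varphi_{r, \overline{\tau}(r), r^2 u(r)}(x))$.

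To extract the foliation property, I would work in the normal coordinates $c$ around $p$ and consider the map
\[
\Phi: [0, \delta) \times \S \to \R^{n+1}, \qquad \Phi(r, \theta) := c^{-1}\bigl(\varphi_{r, \overline{\tau}(r), r^2 u(r)}(\theta)\bigr), \quad \Phi(0, \theta) := 0.
\]
Taylor expanding $\varphi_{r, \tau, r^2 u}$ in these coordinates yields $\Phi(r, \theta) = \overline{\tau}(r) + r\theta + O(r^3)$ uniformly in $\theta$. A short computation combining Lemma \ref{lemmaFolland} (vanishing of odd moments on $\S$), the defining equation of $u_0$ from Lemma \ref{lemmau0}, and the implicit function formula for the derivative of $\overline{\tau}$ applied to the map $G(r,\tau) := \widetilde{\Pi}((H - (n/f(p)) F)/r)$ shows that the $r^{0}$-coefficient of $G(r,0)$ vanishes, hence $\overline{\tau}'(0) = 0$ and $\overline{\tau}(r) = O(r^2)$. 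The rescaled map $\Psi(r, \theta) := \Phi(r, \theta)/r$ for $r > 0$, together with $\Psi(0, \theta) := \theta$, therefore extends to a $C^{m,\alpha}$ map on $[0, \delta) \times \S$ satisfying $\Psi(0, \cdot) = \mathrm{id}_{\S}$, and for $\delta$ small enough it is a $C^{m,\alpha}$ diffeomorphism onto its image. Undoing the rescaling, $\Phi$ is a $C^{m,\alpha}$ diffeomorphism from $(0, \delta) \times \S$ onto a punctured neighborhood of $p$, and $\mathscr{F} = \{S_r\}$ is the claimed foliation, whose leaves $S_r$ are of class $C^{m+2,\alpha}$ as normal graphs of $C^{m+2,\alpha}$ functions over the round sphere.

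The smoothness of the foliation when $f \in C^{\infty}(M)$ follows by a bootstrap: the implicit function theorem can be reapplied at every regularity $m' \geq m$, and the uniqueness part identifies the successive solutions with the already-constructed $(u, \overline{\tau})$. The main obstacle I anticipate is the clean verification that $\Phi$ extends to a local diffeomorphism through $r = 0$, which hinges on the vanishing $\overline{\tau}'(0) = 0$: without it, the spheres $S_r$ could in principle self-intersect or fail to cover a neighborhood of $p$, and the parity cancellations furnished by Lemma \ref{lemmaFolland} applied to $D_r G(0,0)$ are precisely what rules this out.
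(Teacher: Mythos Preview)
Your approach is essentially identical to the paper's: both patch Lemmas~\ref{lemmaPiperp} and~\ref{lemmaPitilde} via the Lyapunov--Schmidt decomposition to obtain~\eqref{equationmainresult}, then establish $\overline{\tau}(r)=O(r^2)$ and invoke Ye's rescaling argument for the foliation property (the paper simply cites \cite{Ye1991,MePe2022} at that step, whereas you spell out the rescaled map $\Psi$), and finish with a bootstrap for the $C^\infty$ case.

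One genuine slip: the identity you assert, $F[r,\overline{\tau}(r)](x)=f\bigl(\varphi_{r,\overline{\tau}(r),r^2u(r)}(x)\bigr)$, is false. By definition $F[r,\tau](x)=f(\varphi_\tau(rx))$ evaluates $f$ at the point of the \emph{unperturbed} geodesic sphere, whereas the surface point is $\varphi_{r,\tau,r^2u}(x)=\varphi_\tau\bigl(r(1-r^2u(x))x\bigr)$ (recall $\nu(x)=-x$ on $\S$), so the two differ by $O(r^3)$. The paper does not justify the ``Hence'' clause either, so this is not a divergence from the paper's proof; but your attempt to fill that gap does not work as written.
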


\begin{proof}
Let us start with an elementary observation which is the gist of the Lyapunov--Schmidt reduction method. If we find $\overline{\tau} = \overline{\tau}(r) \in \R^{n+1}$ with $\overline{\tau}(0) = 0$ and $u = u(r) \in C^{m+2,\alpha}(\S)$ solving the system \eqref{equationPiperp}--\eqref{equationPitilde}, for those $\overline{\tau}(r)$ and $u(r)$,  \eqref{equationmainresult} holds. Keeping this in mind, the existence part of the result immediately follows from Lemmas \ref{lemmaPiperp} and \ref{lemmaPitilde}. Hence, it just remains to show that the family 
$$
\mathscr{F}:= \big\{S_{r,\overline{\tau}(r), r^2 u(r)}: 0 < r \leq \delta\big\},
$$
is a $C^{m,\alpha}$ foliation centered at $p$. Arguing as in the proofs of Lemmas \ref{lemmaExpansion} and \ref{lemmaPitilde}, we infer that
$$
\overline{\tau}(r) = O(r^2), \quad \textup{ as } r \to 0^{+}.
$$
Having  this information at hand, the fact that $\mathscr{F}$ is a $C^{m,\alpha}$ foliation centered at $p$ immediately follows from the arguments in \cite[pages 390--391]{Ye1991} or \cite[page 7]{MePe2022}. The fact that the foliation is smooth when $f$ is smooth follows from a straightforward bootstrap argument. Indeed, if $f \in C^{\infty}(M)$, the function
$$
A(r,\tau,u) := \Pi^{\perp} \left( \frac{1}{r^2} \Big(H[r,\tau,r^2 u](x) - \frac{n}{f(p)} F[r,\tau](x) \Big) \right)
$$
is smooth in all its arguments for $r$, $|\tau|$ and $\|u- u_0\|_{C^{2,\alpha}(\S)}$ sufficiently small. Using then the representation formula given by the implicit function theorem in Banach spaces, we get that
$$
{\rm{d}} u (r,\tau) = - \partial_u A (r,\tau, u(r,\tau)) \circ \partial_{r,\tau} A(r,\tau, u(r,\tau)),
$$
for $r$ and $|\tau|$ sufficiently small. Since $A$ is smooth in all its arguments for $r$ and $|\tau|$ sufficiently small, the regularity of the right hand side is precisely the one of $u$. Hence, if for instance $u$ is $C^1$, we infer that ${\rm{d}}u$ is also $C^1$, which implies that $u$ is actually $C^2$. Iterating this process, we deduce that if $f \in C^{\infty}(M)$, then $u(r,\tau)$ is smooth for $r$ and $|\tau|$ sufficiently small. Arguing on the same way with the function 
$$
B(r,\tau):= \widetilde{\Pi} \left( \frac{1}{r} \Big(H[r,\tau,r^2 u(r,\tau)](x) - \frac{n}{f(p)} F[r,\tau](x) \Big) \right),
$$ 
we conclude that $\overline{\tau}$ is smooth for $r$ sufficiently small and the claimed regularity follows.  
\end{proof}

Concerning the uniqueness of the foliation $\mathscr{F}$ obtained in Theorem \ref{mainresult} and the need of $p$ being a critical point of $f$ for the existence of such foliation, we prove the following:

\begin{theorem} \label{uniquenessAndPpoint}
Consider a positive function $f \in C^{m,\alpha}(M)$ with $m \geq 3$ and $\alpha \in (0,1)$. \begin{itemize}
\item[i)] If there exists a foliation by spheres $\mathscr{F}_2 := \{S_r: 0 < r < \delta\}$ with $S_r$ having mean curvature $H = (n/(rf(p)))f$ that admits a parametrization of the form $S_r = S_{r,\tau^{*}(r), u^{*}(r)}$, with $\|u^{*}(r)\|_{C^{2,\alpha}(\S)} = O(r^{3/2})$ and $\tau^{*}(r) = o(1)$ as $r \to 0^{+}$, then $p$ is a critical point of $f$. % with prescribed mean curvature proportional to $f$ regularly centered at $p$, then $p$ is a critical point of $f$. 
  
\item[ii)] Let $p \in M$ be a non-degenerate critical point of $f$ and let $\mathscr{F}$ be the foliation obtained in Theorem \ref{mainresult}. If $\mathscr{F}_2$ is a foliation as in $i)$ and $U$ denotes the open set foliated by~$\mathscr{F}$, then then $\mathscr{F}_2|_U=\mathscr{F}$.
\end{itemize}
\end{theorem}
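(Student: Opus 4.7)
My strategy for part (i) is to translate the mean curvature condition on~$S_{r}^{*}=S_{r,\tau^{*}(r),u^{*}(r)}$ into the functional equation
\begin{equation*}
H[r,\tau^{*}(r),u^{*}(r)](x)=\frac{n}{f(p)}\,f\big(\varphi_{r,\tau^{*}(r),u^{*}(r)}(x)\big),\qquad x\in\S,
\end{equation*}
expand both sides as $r\to 0^+$, and project onto $\mathcal{K}$. The linearization $D_{u}H[0,\tau,0]=\Delta_{\S}+n$ together with Lemma~\ref{lemmaExpansion} and the hypothesis $\|u^{*}\|_{C^{2,\alpha}}=O(r^{3/2})$ gives
\begin{equation*}
H[r,\tau^{*},u^{*}]=n+(\Delta_{\S}+n)u^{*}-\tfrac{r^{2}}{3}R_{ij}^{\tau^{*}}(0)x^{i}x^{j}+O(r^{5/2}),
\end{equation*}
and a Riemannian Taylor expansion in normal coordinates centered at $c(\tau^{*})$ yields
\begin{equation*}
\tfrac{n}{f(p)}\,f\circ\varphi_{r,\tau^{*},u^{*}}=n+\tfrac{n}{f(p)}(e_{j}f)(p)\tau^{*j}+\tfrac{nr}{f(p)}(e_{i}f)(p)x^{i}+O\big(|\tau^{*}|^{2}+r|\tau^{*}|+r^{2}\big).
\end{equation*}
Subtracting and applying $\Pi$, all terms lying in $\mathcal{K}^{\perp}$ (the constants, $(\Delta_{\S}+n)u^{*}$, and the $x^{i}x^{j}$ contributions annihilated by Lemma~\ref{lemmaFolland}) drop out, leaving at order~$r$ only $\frac{nr}{f(p)}(e_{i}f)(p)x^{i}=O(r|\tau^{*}|+r^{5/2})$ in $\mathcal{K}$. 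Dividing by $r$ and using $\tau^{*}(r)=o(1)$ to let $r\to 0^{+}$ forces $(e_{i}f)(p)x^{i}\equiv 0$ on $\S$, i.e., $\nabla f(p)=0$.

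For part (ii), I would invoke the uniqueness part of the implicit function theorem used in Lemmas~\ref{lemmaPiperp}--\ref{lemmaPitilde}. Writing $u^{*}(r)=r^{2}v^{*}(r)$, the goal is to show that $(\tau^{*}(r),v^{*}(r))$ eventually enters the neighborhood of $(0,u_{0})$ in $\R^{n+1}\times\mathcal{K}^{\perp}_{m+2,\alpha}$ where the IFT produced the solution $(\overline{\tau}(r),u(r))$ of Theorem~\ref{mainresult}. First I would re-center the parametrization, using the translation freedom $\tau\mapsto\tau+\delta\tau$ to absorb the $\mathcal{K}$-component of $u^{*}$; the non-degeneracy of $\Hess f(p)$ guarantees this normalization is well defined. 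Projecting the equation onto $\mathcal{K}^{\perp}$ and dividing by $r^{2}$ gives
\begin{equation*}
(\Delta_{\S}+n)v^{*}(r)=\tfrac{1}{3}\Big(R_{ij}^{\tau^{*}(r)}(0)+\tfrac{3n}{2f(p)}e_{i}e_{j}f(p)\Big)x^{i}x^{j}+o(1),
\end{equation*}
so by Lemma~\ref{lemmau0} and elliptic regularity $v^{*}(r)\to u_{0}$ in $C^{2,\alpha}(\S)$. Projecting onto $\mathcal{K}$ at order $r$ and using $\nabla f(p)=0$, the leading term becomes $\tfrac{nr}{f(p)}\,e_{i}e_{j}f(p)\,\tau^{*j}x^{i}$, and the non-degeneracy of the Hessian forces $\tau^{*}(r)\to 0$. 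Hence $(\tau^{*}(r),v^{*}(r))$ lies, for $r$ small enough, in the uniqueness neighborhood of the implicit function theorem, giving $\tau^{*}(r)=\overline{\tau}(r)$ and $v^{*}(r)=u(r)$, so $S_{r}^{*}=S_{r}$ and $\mathscr{F}_{2}|_{U}=\mathscr{F}$.

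The main obstacle I anticipate is the bootstrap step in part~(ii): the a priori decay $\|u^{*}\|_{C^{2,\alpha}}=O(r^{3/2})$ is strictly weaker than the $O(r^{2})$ scaling produced by the main theorem, so the improved rate $v^{*}=O(1)$ has to be extracted from the equation itself before uniqueness in the IFT can be invoked. A related subtlety is the intrinsic gauge ambiguity in the parametrization $(r,\tau,u)\mapsto S_{r,\tau,u}$: the same geometric sphere admits several such representations, and a canonical choice (e.g., $u\in\mathcal{K}^{\perp}$) must be fixed before $\mathscr{F}_{2}$ can be compared leaf-by-leaf with the Lyapunov--Schmidt solution of Theorem~\ref{mainresult}.
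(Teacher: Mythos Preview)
Your argument for part~(i) is correct and coincides with the paper's: both project the identity $H=\tfrac{n}{f(p)}F$ onto~$\mathcal{K}$, use that $(\Delta_{\S}+n)u^{*}$, constants, and the quadratic forms $x^{i}x^{j}$ lie in~$\mathcal{K}^{\perp}$, and read off $e_{i}f(p)=0$ from the surviving order-$r$ term after letting $\tau^{*}(r)\to 0$.

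For part~(ii), you have the right ingredients but in the wrong order, and this causes a genuine gap. In your scheme you first divide the $\Pi^{\perp}$-equation by $r^{2}$ to obtain $(\Delta_{\S}+n)v^{*}=\tfrac{1}{3}(\ldots)x^{i}x^{j}+o(1)$. However, the $\Pi^{\perp}$-projection retains the constant $n-\tfrac{n}{f(p)}f(c(\tau^{*}))=O(|\tau^{*}|^{2})$ (constants are orthogonal to~$\mathcal{K}$), and after dividing by $r^{2}$ this contributes $O(|\tau^{*}|^{2}/r^{2})$, which is \emph{not} $o(1)$ under the sole hypothesis $\tau^{*}=o(1)$. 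So your displayed equation for $v^{*}$ is unjustified at that stage. The paper avoids this by reversing the order: it first projects onto~$\mathcal{K}$, which (using only $\|u^{*}\|_{C^{2,\alpha}}=O(r^{3/2})$) yields
\[
0=-\tfrac{n}{f(p)}\,e_{j}e_{i}f(p)(\tau^{*})^{j}\mathbf{e}_{i}+O(r^{2})+O(|\tau^{*}|^{2}),
\]
and the non-degeneracy of the Hessian then gives the \emph{quantitative} rate $\tau^{*}(r)=O(r^{2})$, not merely $\tau^{*}\to 0$ (which is your stated conclusion and is already the hypothesis). With $\tau^{*}=O(r^{2})$ in hand, the constant term above becomes $O(r^{4})$, the $\Pi^{\perp}$-equation yields $\|u^{*}\|_{C^{2,\alpha}}=O(r^{2})$ by elliptic estimates, and then $\bar u:=u^{*}/r^{2}\to u_{0}$ follows. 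Your own ``main obstacle'' paragraph correctly diagnoses that the improved rate must be extracted before invoking IFT uniqueness; the missing observation is that this rate comes from the $\mathcal{K}$-projection, and must be obtained first. Your remark on the gauge ambiguity (re-centering so that $u^{*}\in\mathcal{K}^{\perp}$) is a valid subtlety that the paper treats only implicitly.
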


\begin{proof}
$i)$ First of all, observe that
\begin{equation} \label{identityTaustarUstar}
H[r,\tau^*(r), u^*(r)](x) = \frac{n}{f(p)} F[r,\tau^*(r)](x),  \qquad \textup{ in } \S.
\end{equation}
Then, combining the fact that $\|u^{*}(r)\|_{C^{2,\alpha}(\S)} = O(r^{3/2})$, as $r \to 0^{+}$, with Lemmas \ref{lemmaExpansion} and \ref{lemmaFolland} (see also \cite[formula (1.19)]{Ye1991}), we get that
\begin{align*}
0 & = \widetilde{\Pi} \left( \frac{1}{r} \Big( H[r,\tau^*(r), u^*(r)](x) - \frac{n}{f(p)} F[r,\tau^*(r)](x) \Big) \right) \\
& = - \frac{n}{f(p)} \sum_{i=1}^{n+1} e_i^{\tau^*(r)} f(c(\tau^*(r))) \mathbf{e}_i + O(r^2) \\
& = - \frac{n}{f(p)} \sum_{i=1}^{n+1} e_if(p) \mathbf{e}_i + \frac{n}{f(p)} \sum_{i=1}^{n+1} \left( e_i f(p) -  e_i^{\tau^*(r)} f(c(\tau^*(r))) \right) \mathbf{e}_i + O(r^2), \quad \textup{ as } r \to 0^{+},
\end{align*}
and the first part of the result immediately follows from the assumption $\tau^*(r) = o(1)$, as $r \to 0^{+}$. 

$ii)$ Let $p \in M$ be a nondegenerate critical point of $f$ and let $\mathscr{F}_2$ be as in $i)$. Following the proof of $i)$, we infer that
\begin{align*}
0 & = - \frac{n}{f(p)} \sum_{i=1}^{n+1} e_i^{\tau^*(r)} f(c(\tau^*(r))) \mathbf{e}_i + O(r^2) \\
& = - \frac{n}{f(p)} \sum_{i=1}^{n+1} e_j e_i f(p) (\tau^{*}(r))^j  \mathbf{e}_i + O(r^2) + O(|\tau^*(r)|^2), \quad \textup{ as } r \to 0^{+},
\end{align*}
and so, using the non-degeneracy assumption, we get that $\tau^*(r) = O(r^2)$ as $r \to 0^+$. Then, combining this size  with \eqref{identityTaustarUstar} and the fact that $p$ is a critical point of $f$, we get that
\begin{align*}
0 & = \Pi^{\perp}  \left( \frac{1}{r} \Big( H[r,\tau^*(r), u^*(r)](x) - \frac{n}{f(p)} F[r,\tau^*(r)](x) \Big) \right) \\
& = n + [(\Delta_{\S} + n)u^*(r)](x) - \frac13 R_{ij}^{\tau^*(r)}(0) \, x^i x^j r^2 \\
& \quad  - \frac{n}{f(p)} \left( f(c(\tau^*(r))) + \frac12 e_i^{\tau^*(r)} e_j^{\tau^*(r)} f(c(\tau^*(r))) x^i x^j r^2 \right) + O(r^3)\\
& = [(\Delta_{\S} + n) u^*(r)](x) - \frac13 \left( R_{ij}^{\tau^*(r)}(0) + \frac{3n}{2f(p)} e_i^{\tau^*(r)} e_j^{\tau^*(r)} f(c(\tau^*(r))) \right) x^i x^j r^2 \\
& \qquad - \frac{n}{f(p)} e_i f(p) (\tau^*(r))^i + O(|\tau^*(r)|^2) + O(r^3) \\
& =  [(\Delta_{\S} + n) u^*(r)](x) - \frac13 \left( R_{ij}^{\tau^*(r)}(0) + \frac{3n}{2f(p)} e_i^{\tau^*(r)} e_j^{\tau^*(r)} f(c(\tau^*(r))) \right) x^i x^j r^2 + O(r^3), \quad \textup{as } r \to 0^+.
\end{align*}
From standard elliptic estimates, we then infer that $\|u^*(r)\|_{C^{2,\alpha}(\S)} = O(r^2)$, as $r \to 0^+$. 

Now, we define $\overline{u}(r) := u^*(r) r^{-2}$ and, for $u_0 \in \mathcal{K}_{m+2,\alpha}^{\perp} \cap C^{\infty}(\S)$ as in Lemma \ref{lemmau0}, we get that
\begin{align*}
[(\Delta_{\S} + n )(\overline{u}(r) - u_0)](x) = \, & \frac{1}{3} \left(R_{ij}^{\tau^*(r)}(0) - R_{ij}^0(0) \right) x^i x^j \\
& + \frac{n}{2f(p)} \left( e_i^{\tau^*(r)} e_j^{\tau^*(r)} f(c(\tau^*(r))) - e_i e_j f(p) \right) x^i x^j + O(r), \quad \textup{as } r \to 0^{+}.
\end{align*}
Using again standard elliptic estimates, we infer that
$$
\lim_{r \to 0^+} \|\overline{u}(r) - u_0\|_{C^{2,\alpha}(\S)} = 0.
$$
Having at hand this convergence, the result follows from the uniqueness parts of the implicit function theorems used in Lemmas \ref{lemmaPiperp} and \ref{lemmaPitilde}.
\end{proof}

\section{The degenerate case: proof of Theorem \ref{mainintro-index}} \label{sect4}

At the expense of losing the regularity of the function $\overline{\tau}$ and the fact that the spheres we construct define a foliation, we can remove the non-degeneracy assumption from Theorem \ref{mainresult}. This section is devoted to prove Theorem~\ref{mainintro-index}, which makes this fact precise.  The proof of this result shares some elements with the proof of Theorem \ref{mainresult}. However, instead of using the implicit function theorem twice as before, we use a topological degree argument, which allows to remove the non-degeneracy assumption from Theorem \ref{mainresult}. As happens in~\cite{PX}, the reason for which we cannot conclude that we have a foliation with prescribed mean curvature is that the spheres can, in principle, intersect.

We refer to~\cite[Chapter 3]{AmMa2007} for the standard notation concerning the Brouwer degree we are going to use. Let us just remind here that, given a continuous vector field $X: \R^{n+1} \to \R^{n+1}$, its index at an isolated zero~$x_0 \in \R^{n+1}$ can be defined as
$
{\rm{ind}}(X, x_0) := \lim_{r \to 0^+} \deg \big( {X, \mathbb{B}_r(x_0), 0} \big).
$
%Accordingly, we have that
%\begin{equation}
%5{\rm{ind}} (\nabla^g f, p) = \lim_{\rho \to 0} \deg \Big( \sum_{i=1}^{n+1} %e_i^{\tau} f(c(\tau)) e_i^{\tau},\, \mathbb{B}_{\rho},\,0\Big).
%\end{equation}

\begin{theorem} \label{mainresult-index}
Consider a positive function $f \in C^{m,\alpha}(M)$ with $m \geq 3$ and $\alpha \in (0,1)$. If $p \in M$ is an isolated critical point of $f$ and the index of the gradient field $\nabla^g f$ at $p$ is nonzero, there exist a constant $r_{\star} > 0$, a function $u \in C^{m,\alpha}([0,r_{\star}) \times \mathbb{B}_{r_{\star}}, C^{m+2,\alpha}(\S))$, and another constant $\delta\in (0,r_\star)$ for which the following holds true: for all $r\in[0,\de]$, there exists some $\overline{\tau}(r) \in \mathbb{B}_{r_\star}$ such that
\begin{equation} \label{equationmainresul-index}
H[r,\overline{\tau}(r), r^2 u(r,\overline{\tau}(r))](x) = \frac{n}{f(p)} F[r,\overline{\tau}(r)](x), \quad \textup{ in } \S. 
\end{equation}
Hence, $\mathscr{F}:= \{S_r\equiv  S_{r,\overline{\tau}(r),r^2 u(r)}: 0 < r < \delta\}$ is a family of spheres of class $C^{m+2,\alpha}$ and $S_r$ has mean curvature $H=(n/(rf(p)))f$. The spheres are smooth if $f \in C^{\infty}(M)$. 
\end{theorem}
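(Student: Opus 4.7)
The plan is to mimic the Lyapunov--Schmidt reduction carried out in the proof of Theorem \ref{mainresult}, but to replace the second (finite-dimensional) application of the implicit function theorem by a Brouwer degree argument. The first step, namely Lemma \ref{lemmaPiperp}, goes through unchanged: its hypothesis ``$p$ is a critical point of~$f$'' is met because $p$ is assumed to be critical, and non-degeneracy plays no role in that statement. Thus one obtains a constant $r_{\star} > 0$ and a function $u \in C^{m,\alpha}([0, r_{\star}) \times \mathbb{B}_{r_{\star}},\, \mathcal{K}^{\perp}_{m+2,\alpha})$ solving the orthogonal projection equation \eqref{equationPiperp}. The problem reduces to showing that, for every sufficiently small $r$, there exists $\overline{\tau}(r) \in \mathbb{B}_{r_{\star}}$ annihilating the finite-dimensional map
\begin{equation*}
\Phi(r,\tau) := \widetilde{\Pi}\left(\frac{1}{r} \Big( H[r,\tau, r^2 u(r,\tau)](x) - \frac{n}{f(p)} F[r,\tau](x) \Big)\right),
\end{equation*}
with the value at $r = 0$ defined by continuous extension.

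\textbf{The limiting vector field.} Using \eqref{Hexpansion} together with the $\tau$-general expansion $F[r,\tau](x) = f(c(\tau)) + (e_i^{\tau} f)(c(\tau))\, x^i r + O(r^2)$ derived in the proof of Lemma \ref{lemmaExpansion}, I would observe that after dividing by $r$ the only surviving terms under $\widetilde{\Pi}$ at $r = 0$ are those linear in $x$: constants and $(\Delta_{\mathbb{S}^n} + n) u$ lie in $\mathcal{K}^{\perp}$, while the Lemma \ref{lemmaFolland} integrals show that quadratic monomials $x^i x^j$ project to zero. This makes $\Phi$ continuous on $[0, r_{\star}) \times \mathbb{B}_{r_{\star}}$, with
\begin{equation*}
\Phi(0,\tau) = -\frac{n}{f(p)}\, V(\tau), \qquad V(\tau) := \sum_{i=1}^{n+1} (e_i^{\tau} f)(c(\tau))\, \mathbf{e}_i.
\end{equation*}
Because $\{e_i^{\tau}\}_{i=1}^{n+1}$ is an orthonormal frame at $c(\tau)$, the vector $V(\tau) \in \R^{n+1}$ is nothing but the expression of $\nabla^g f|_{c(\tau)}$ in this frame, so it vanishes exactly when $c(\tau)$ is a critical point of~$f$. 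In particular $V$ has an isolated zero at $\tau = 0$.

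\textbf{Degree argument.} The next step is to use that the index of $V$ at $0$ coincides with the index of $\nabla^g f$ at~$p$, which is nonzero by hypothesis. Choose $\rho \in (0, r_{\star})$ small enough that $V$ does not vanish on $\overline{\mathbb{B}_\rho} \setminus \{0\}$ and $\deg(V, \mathbb{B}_\rho, 0)$ equals that nonzero index. By continuity of $\Phi$ on $[0, r_{\star}) \times \mathbb{B}_{r_{\star}}$ and compactness of $\partial \mathbb{B}_\rho$, one can then pick $\delta \in (0, r_{\star})$ such that $\Phi(r, \tau) \neq 0$ for every $(r, \tau) \in [0, \delta] \times \partial \mathbb{B}_\rho$. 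Homotopy invariance of the Brouwer degree yields
\begin{equation*}
\deg(\Phi(r, \cdot), \mathbb{B}_\rho, 0) = \deg(V, \mathbb{B}_\rho, 0) \neq 0, \qquad r \in [0, \delta],
\end{equation*}
so that for every such $r$ there is at least one $\overline{\tau}(r) \in \mathbb{B}_\rho$ satisfying \eqref{equationmainresul-index}. The $C^{m+2,\alpha}$ regularity of the spheres $S_r := S_{r, \overline{\tau}(r), r^2 u(r, \overline{\tau}(r))}$ is then inherited from $u$ in the spherical variable, and a standard elliptic bootstrap on the prescribed mean curvature equation upgrades them to $C^{\infty}$ when $f \in C^{\infty}(M)$.

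\textbf{Main obstacle.} The most delicate point is the identification of $\mathrm{ind}(V, 0)$ with $\mathrm{ind}(\nabla^g f, p)$. Although the frames $\{e_i^{\tau}\}$ and the coordinate frame of the normal chart $c$ agree at $\tau = 0$, one must verify that the change-of-frame matrix between them remains invertible throughout a whole neighborhood of~$0$, in order to build a homotopy between $V$ and the coordinate representation of $\nabla^g f \circ c$ through vector fields that do not vanish on $\partial \mathbb{B}_\rho$ for $\rho$ small. Beyond this, the degree argument yields $\overline{\tau}(r)$ only as a continuous function of $r$ with no uniqueness claim, so nothing prevents the spheres $S_r$ from intersecting: this is precisely why, as in~\cite{PX}, one cannot assert that the family $\{S_r\}$ defines a foliation.
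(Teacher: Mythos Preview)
Your proposal is correct and follows essentially the same route as the paper: Lemma~\ref{lemmaPiperp} furnishes $u(r,\tau)$, after which the reduced finite-dimensional map $\Phi(r,\tau)$ (the paper's $G$) is analysed via Brouwer degree, the key point being that $\Phi(0,\tau)=-\tfrac{n}{f(p)}V(\tau)$ has the same index at $0$ as $\nabla^g f$ at~$p$. The ``main obstacle'' you flag is exactly what the paper resolves: writing $e_i^\tau=B_{ij}(\tau)\partial_{\tau^j}$ with $B(\tau)=I+O(|\tau|)$ and $\nabla^g f=g^{-1}D_\tau f$, one uses the linear homotopy $A_s(\tau)=(1-s)B(\tau)+s\,g(\tau)^{-1}$, which is invertible for $|\tau|$ small since $A_s(0)=I$; thus $\Gamma(s,\tau)=A_s(\tau)D_\tau f(\tau)$ is admissible and gives $\mathrm{ind}(V,0)=\mathrm{ind}(\nabla^g f,p)$. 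One small correction: the degree argument does not even produce a continuous selection $r\mapsto\overline{\tau}(r)$, only existence for each fixed $r$.
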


\begin{proof}
Let $u \in C^{m,\alpha}([0,r_0) \times \mathbb{B}_{r_0}, \mathcal{K}^{\perp}_{m+2,\alpha})$ be as in Lemma \ref{lemmaPiperp}. Arguing as in the proof of Theorem~\ref{mainresult}, the proof of this result is reduced to show the existence of $\delta \in (0,r_0)$ such that, for all $r \in [0,\delta]$, there exists $\overline{\tau}(r) \in \mathbb{B}_{r_0}$ satisfying
$$
\widetilde{\Pi} \left( \frac{1}{r} \Big(H[r,\overline{\tau}(r),r^2 u(r,\overline{\tau}(r))](x) - \frac{n}{f(p)} F[r,\overline{\tau}(r)](x) \Big) \right) = 0, \quad \textup{ in } \R^{n+1}. 
$$
We prove this is indeed the case using a topological degree argument. Let us introduce the map
\begin{equation}
G: [0,r_0) \times \mathbb{B}_{r_0} \to \R^{n+1}, \quad (r,\tau) \mapsto \widetilde{\Pi} \left( \frac{1}{r} \Big(H[r,\tau,r^2 u(r,\tau)](x) - \frac{n}{f(p)} F[r,\tau](x) \Big) \right).
\end{equation}
Since $p$ is a critical point of $f$ with $f(p) \neq 0$, combining Lemmas \ref{lemmaExpansion} and \ref{lemmaFolland}, we get that
$$
G(0,0) = -\frac{n}{f(p)} \sum_{i=1}^{n+1} e_i f(p) \mathbf{e}_i = 0. 
$$
Moreover, since $p$ is an isolated critical point of $f$, there exists $r_1 \in (0,r_0]$ such that
$$
G(0,\tau) = -\frac{n}{f(p)}\sum_{i=1}^{n+1} e_i^{\tau} f(c(\tau)) \mathbf{e}_i \neq 0, \quad \textup{ for all } \tau \in \overline{\mathbb{B}}_{r_1} \setminus \{0\}.
$$ 
Thus, it follows that
$
{\rm{ind}} (G(0, \cdot), 0) 
$
is well-defined. 

We now choose local coordinates $\tau = (\tau^1, \ldots, \tau^{n+1})$ such that the representation of $p$ in coordinates is $\tau = 0$ and $\partial_{\tau^i}|_{\tau = 0} = e_i$. Thus, we have $g_{ij}(0) = \delta_{ij}$ and $e_i^{\tau} = B_{ij} \partial_{\tau^j}$ with $B_{ij}(\tau) = \delta_{ij} + O(|\tau|)$. Also, we introduce the auxiliary function
$$
\widetilde{f}: \mathbb{B}_{r_0} \to \R, \qquad \tau \mapsto f(c(\tau)),
$$
and denote $D_{\tau} f(\tau) := (\partial_{\tau^1} \widetilde{f}(\tau), \ldots, \partial_{\tau^{n+1}} \widetilde{f}(\tau))$. Then, it is immediate to check that
\begin{equation} \label{G0homotopy}
G(0,\tau) = B(\tau) D_{\tau} f(\tau).
\end{equation}
On the other hand, the representation in coordinates of $\nabla^g f$ is  precisely
\begin{equation} \label{G1homotopy}
\nabla^g f = g^{-1}  D_{\tau} f,
\end{equation}
where the matrix $g^{-1} :=( g^{ij})$ is the inverse of the metric $g$.

Having at hand \eqref{G0homotopy}-\eqref{G1homotopy}, we introduce the map
\begin{equation*}
\Gamma: [0,1] \times \mathbb{B}_{r_0} \to \R^{n+1}, \qquad (s,\tau) \mapsto \left[ (1-s) B(\tau) + s g(\tau)^{-1} \right] D_{\tau} f(\tau).  
\end{equation*}
Since $ A_s:= (1-s) B + s g^{-1} = (1-s) I + s g^{-1} + O(|\tau|)$, we deduce there exists $r_2 \in (0,r_1]$ such that $\det(A_s(\tau)) \neq 0$ for all $(s,\tau) \in [0,1] \times \overline{\mathbb{B}}_{r_2}$. Hence, since $p$ is an isolated critical point of $f$, we infer there exists $r_3 \in (0,r_2]$ such that $\Gamma(s,\tau) \neq 0$ for all $(s,\tau) \in [0,1] \times \partial \mathbb{B}_{r_3}$ and so, that $\Gamma:[0,1] \times \mathbb{B}_{r_3} \to \R^{n+1}$ is an admissible homotopy. Moreover, observe that $\Gamma(0,\cdot) = G(0,\cdot)$ and $\Gamma(1,\cdot) = g^{-1} D_{\tau} f$. Then, by the homotopy invariance of the Brouwer degree and the excision property, we get that 
$$
\deg (G(0, \cdot), \mathbb{B}_{r}, 0) = \deg(g^{-1} D_{\tau} f, \mathbb{B}_r,0), \quad \textup{ for all } r \in (0,r_3].
$$
Thus, we conclude that
$$
{\rm{ind}}(G(0,\cdot),0) \neq 0. 
$$
The theorem then follows from, for instance, \cite[Remark 4.2.1 and Theorem 4.3.4]{AmAr2011}. 
\end{proof}

\section*{Acknowledgements} 

This work has received funding from the European Research Council (ERC) under the European Union's Horizon 2020 research and innovation programme through the grant agreement~862342 (A.E.\ and A.J.F.). It is partially supported by the grants CEX2019-000904-S, RED2022-134301-T and PID2022-136795NB-I00 (A.E. and D.P.-S.) funded by MCIN/AEI/10.13039/501100011033, and Ayudas Fundaci\'on BBVA a Proyectos de Investigaci\'on Cient\'ifica 2021 (D.P.-S.).

\bibliography{Bibliography}
\bibliographystyle{abbrv}
\vspace{0.2cm}

\end{document}